\def\cU{\mathcal U}
\def\eps{\varepsilon}
\newcommand{\cR}{\mathcal R}
\newcommand{\norm}[1]{\| #1\|}
\def\a{          \alpha}
\def\cW{          \mathcal W}
\def\cWu{          \mathcal W^{u}}
\def\cWs{          \mathcal W^{s}}
\let\cal\mathcal
\def \R{{\mathbb R}}
\def \Z{{\mathbb Z}}
\def \N{{\mathbb N}}
\newcommand{\T}{{\mathbb T}}
\newcommand{\prf}{{\begin{proof}}}
\newcommand{\epf}{{\end{proof}}}
\newcommand{\Q}{{\mathbb Q}}
\newtheorem{prop}{\sc Proposition}
\newtheorem{lemma}{\sc lemma}
\newtheorem{cor}{\sc corollary}
\theoremstyle{definition}
\newtheorem{Main}{Theorem}
\def\bee{\begin{equation}}
\def\eee{\end{equation}}
\newtheorem{defi}{\sc Definition}
\newtheorem{nota}{\sc Notation}
\theoremstyle{rema}
\newtheorem{rema}{\sc Remark}
\newcommand{\pdvr}[2]
{\dfrac{\partial^{#2} #1}{\partial \theta^{#2_1} \partial r^{#2_2}}}
\newcommand{\pdvrs}[2]
{\partial^{#2} #1 /\partial \theta^{#2_1} \partial r^{#2_2}}
\newtheorem{example}{Example}
\numberwithin{equation}{section}
\author{Bassam Fayad  and  Zhiyuan Zhang}
\begin{document}

%
%
%
%

\title[Effective Katok's horseshoe theorem ]{An effective version of Katok's horseshoe theorem for conservative $C^2$ surface diffeomorphisms}
\date{\today}
\maketitle

\begin{abstract}
For area preserving $C^2$ surface diffeomorphisms,  we give an explicit finite information condition, on the exponential  growth of the number of Bowen's $(n,\delta)-$balls  needed to cover a positive proportion of the space, that is sufficient to guarantee positive topological entropy. This can be seen as an effective version of Katok's horseshoe theorem in the conservative setting. We also show that the analogous result is false in dimension larger than $3$.
\end{abstract}

\section{Introduction}

 Let $X$ be a compact smooth surface with a Riemannian metric.
Denote by ${\rm Diff}^{r}_{\text{vol}}(X)$ the group of $C^r$ diffeomorphisms which preserve the volume form $m$ induced by the Riemannian metric.  Without loss of generality, we assume that $m(X) = 1$.

A well-known result of Katok, based on Pesin theory, says that  if $f \in {\rm Diff}^{1+\epsilon}(X)$ has 
 non-zero Lyapunov exponent for some $f-$invariant non atomic measure, then the topological entropy of $f$ is positive and that $f$ actually has invariant horseshoes that carry most of the topological entropy  (see for example \cite{katok_ihes}, or \cite{kh}). In particular, this is the case for any $f \in {\rm Diff}^{1+\epsilon}_{\text{vol}}(X)$ having positive Lyapunov exponents on a positive measure set, or in other words, when $f$ has positive metric entropy by Pesin's formula. 

Besides the positivity of Lyapunov exponents, another manifestation of positive metric entropy is the exponential rate of growth of the Bowen $(n,\delta)-$balls (see Definition \ref{def:bowen}) that are needed to cover a definite proportion of $X$ (see for example  \cite{kh}).

\begin{defi} \label{def:bowen}Given a continuous map $f : X \to X$.
For any $\delta > 0$, integer $n\geq 1$, any $x \in X$, we define Bowen's $(n,\delta)-$ball centered at $x$ by
\begin{align*}
B_f(x, n, \delta) = \{y | d(f^{i}(x), f^{i}(y)) < \delta, \forall 0 \leq i \leq n-1 \}
\end{align*}
Given an $f-$invariant measure $\mu$.
For any $\varepsilon \in ( 0, 1 )$, let
$N_{f}(n, \delta, \varepsilon) = \inf_{\cU} |\cU|$, where $\cU$ is taken over all the subsets of $\{B_f(x, n, \delta)\}_{x \in X}$ such that the union of $(n,\delta)-$balls in $\cU$ has $\mu-$measure not less than $1-\varepsilon$. For a finite set $I$, we use $|I|$ to denote the cardinality of $I$.
\end{defi}

By the sub-additive growth of the number of Bowen balls and Katok's horseshoe theorem, the following statement is true by compacity: 

\textit{Fact: If the $C^2$ norm of $f$ is bounded by $D>0$, and if $h,\delta,\varepsilon>0$ are fixed, then there exists $n_0 = n_0(D,h,\delta,\eps)>0$ such that if   
$N_{f}(n, \delta, \varepsilon) \geq e^{nh}$ for some integer $n > n_0$, then $f$ has positive topological entropy. 
}

{\it Sketch of proof.} Assume by contradiction that there exists $h,\delta,\varepsilon>0$ and a sequence $f_{n}$ with a uniform bound on its $C^2$ norm  for which $N_{f_n}(n, \delta, \varepsilon) 
\geq e^{nh}$ and $h_{\rm top}(f_{n})=0$. By compacity we can, up to passing to a subsequence, assume that $f_n$ has a limit $f$ that is $C^{1+{\rm Lip}}$. Since for any $g$, the minimal number   $N_{g}(n, \delta)$ needed to cover all of $X$ is essentially sub-additive in $n$, we have that for a fixed $k \in \N$,  and for any $n$ sufficiently large $N_{f_{n}}(k, \delta) \geq e^{kh/2}$. Therefore   $N_{f}(k, \delta) \geq e^{kh/2}$   for any $k \in \N$
and hence $f$ has positive topological entropy. By Katok's horseshoe theorem, this contradicts the assumption $h_{\rm top}(f_n)=0$ for all $n$.  \hfill $\hfill \Box$



In this paper, we will give a direct proof of the above fact that also provides an explicit upper bound for  $n_0(D,h,\delta,\eps)$. Our bound will essentially be a tower-exponential of height $K \sim \log(\frac{\log A}{h})$ where $A=\|f\|_{C^1}$. 
The norm of the second derivative of $f$ enters into the argument of the tower-exponential bound. We will not use in our proof {\it any}  ergodic theory.

Our main tool is a finite information closing lemma for a map $g\in {\rm Diff}^{2}_{\text{vol}}(X) $ that generalizes the one obtained in \cite[Theorem 4]{AFLXZ}. 
Theorem 4 in \cite{AFLXZ} asserts that 
 if $x$ is such that $\norm{Dg^q(x)}$ is comparable to $\norm{Dg}^{\theta q}$ where  $ \theta$ is close to $1$ and  $q$ is sufficiently large compared to powers of the $C^2$ norm of $g$, then there exists a hyperbolic periodic point that shadows a piece of a length $q$ orbit of $x$. A similar effective closing lemma was previously obtained by Climenhaga and Pesin in \cite{CP} for  $C^{1+\epsilon}-$diffeomorphisms in any dimension, assuming however the existence of a splitting of the tangent spaces along a long orbit with some additional estimates of effective hyerbolicity.  For an interesting application of the latter effective approach, we refer the reader to \cite{CDP}. 
 
  In this note we will need a generalized version of the effective closing lemma in \cite{AFLXZ} that gives a shadowing of $x$ by a hyperbolic periodic orbit, even when $\norm{Dg^q(x)}$ is much smaller than $\norm{Dg}^{\theta q}$,  provided that $ \norm{Dg^q(x)} \geq \norm{Dg(g^i(x))}^{\theta q}$, for most of the $i \in [0,q]$. 
An inductive use of this closing lemma allows one to obtain, under the growth condition of the $(n,\delta)$-balls, sufficiently many hyperbolic periodic points with a good control on their local stable and unstable manifolds to insure the existence of a horseshoe. 
Note that, in order exploit the growth condition of the Bowen balls, we need sufficiently precise informations from the shadowing property, which are  not covered by the direct bootstrapping of Theorem 4 in \cite{AFLXZ}.

With the same approach, we are also able to conclude positive topological entropy from derivative growth at an explicit time scale along a single, yet not too concentrated, orbit.

\subsection{Statements of the main results}

Throughout this note,  $X$ is a compact surface with a  volume form $m$. Without loss of generality, we assume that $m(X) = 1$.
We will denote by $f : X \to X$ a $C^2$ diffeomorphism that preserves $m$ such that for constants $A,D > 0$, 
\begin{align} \label{norms} \tag{$*$}
\left\{ \begin{array}{ll}  \norm{Df}&\leq A, \\   \norm{D^2f}&\leq D.  \end{array}
\right. \end{align} 
Here $\norm{Df}$, $\norm{D^2f}$  denote respectively the supremum of the first and second derivatives of $f$.

All the constants that appear in the text will implicitly depend on the surface $X$.


To simplify notations, we define the following.

\begin{defi}
For $R_0,R_1 > 0$, $K \in \Z_{+}$, we define function $Tower: \R_{+}^2 \times \Z_{+} \to \R$ by the following recurrence relation,
\begin{eqnarray}
Tower(R_0, R_1, K) = \begin{cases} 
                 R_0, & K=1, \\
                 R_1^{Tower(R_0, R_1, K-1 )}, & K \geq 2.
               \end{cases}
\end{eqnarray}
\end{defi}

Our main result is the following.

\begin{Main}\label{mainA} There exists a constant $C_0 = C_0(X) >0$ such that the following is true. For any  $A, D > 1$, $h \in ( 0, \log A]$, $\varepsilon \in (0,1)$, $\delta > 0$, denote by
\begin{eqnarray}\label{defQ} 
P_0  &=&  \max( \varepsilon^{-1}e^{C_0(\log(\frac{\log A}{h}))^2 + C_0}, C_0h^{-1}\log \delta^{-1} ), \\
P_1 &=&  e^{C_0 h^{-1} \log D \log A} \label{defQ1}.
\end{eqnarray}
If $f : X \to X$ is a $C^2$ diffeomorphism preserving $m$ that satisfies \eqref{norms}, and $N_{f}(n, \delta, \varepsilon) > e^{nh}$ for some $n \geq Tower(P_0, P_1,K_0)$, where $K_0 =  \lceil C_0 \log(\frac{\log A}{h}) + C_0 \rceil $, then 
$f$ has positive topological entropy.
 \end{Main}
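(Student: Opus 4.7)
The plan is to convert the growth hypothesis on $N_f(n,\delta,\varepsilon)$ into pointwise derivative growth, then run the generalized effective closing lemma inductively to harvest many hyperbolic periodic orbits with controlled local invariant manifolds, and finally combine two such orbits into a horseshoe via a transverse intersection argument. Since we are on a surface and $f$ preserves area, the Bowen ball $B_f(x,n,\delta)$ is essentially a ``hyperbolic rectangle'' whose area is comparable (up to a distortion factor depending on $D$) to $\delta^2/\norm{Df^n(x)}$. The hypothesis $N_f(n,\delta,\varepsilon) > e^{nh}$ together with area preservation therefore forces, via a volume-packing argument, a set $E \subset X$ of measure $\gtrsim \varepsilon$ on which $\norm{Df^n(x)} \geq c\, \delta^2 \varepsilon^{-1} e^{nh}$. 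Once $n$ exceeds the factor $P_0$ (which absorbs $\varepsilon^{-1}$ and $h^{-1}\log\delta^{-1}$), the exponential rate $\tfrac{1}{n}\log\norm{Df^n(x)}$ is at least $h/2$ throughout $E$.

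Next, I would refine $E$ to adapt it to the hypotheses of the generalized closing lemma announced in the introduction. The lemma requires that $\norm{Dg^q(x)}$ dominate $\norm{Dg(g^i(x))}^{\theta q}$ for a large proportion of $i \in [0,q]$, rather than dominating the uniform bound $\norm{Dg}^{\theta q}$. I would pigeonhole the $n$ indices according to the dyadic value of $\log\norm{Df(f^i(x))}$ and discard a small exceptional set of times where the derivative is anomalously large. This replaces the global bound $A$ by the \emph{observed} derivative sizes along the orbit, at the cost of passing to a subset whose measure is still bounded below by a quantity controlled by $P_0$.

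I would then apply the effective closing lemma iteratively. Each application produces a hyperbolic periodic point shadowing a sub-segment of an orbit starting in $E$, with explicit lower bounds on the lengths of its local stable and unstable manifolds in terms of $D$, $A$ and the observed exponent. The parameters at each level degrade by a factor that is itself exponential in $h^{-1}\log D \log A$, which is exactly the quantity $P_1$ playing the role of the base of the tower; the induction terminates after roughly $K_0 \sim \log(\log A/h)$ steps, explaining the tower-exponential form of the threshold. The output of this induction is a collection of hyperbolic saddles, exponentially many in $n$, whose local invariant manifolds all have size bounded below by a definite constant depending only on $X$, $D$, $h$.

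The final step is to build a horseshoe. Because the number of saddles produced is much larger than the number of ``geometric types'' available (classified by the direction and position of the local stable/unstable manifolds at a chosen scale), two of the saddles $p$ and $q$ must have local unstable manifolds that cross the local stable manifold of a third saddle transversally. An inclination-lemma / $\lambda$-lemma argument then produces a transverse homoclinic intersection for one of these saddles, yielding positive topological entropy by Smale's theorem. The main obstacle, and the crux of the paper, is the generalized closing lemma together with its inductive use in step three: one must ensure that the shadowing quality and the size of the local manifolds do not deteriorate uncontrollably across the $K_0$ iterations, and that enough orbits survive each step so that the final horseshoe-construction pigeonhole succeeds; this is where the careful bookkeeping of constants and the precise form of the tower bound become essential.
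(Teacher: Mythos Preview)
Your first step contains a genuine gap: the claim that the Bowen ball $B_f(x,n,\delta)$ has area comparable to $\delta^2/\norm{Df^n(x)}$ is false in general for area-preserving surface maps. The Bowen ball is an intersection $\bigcap_{i=0}^{n-1} f^{-i}(B(f^i(x),\delta))$, and its size is governed by the derivative at \emph{all} intermediate times, not only at time $n$. A point can have $\norm{Df^n(x)}=1$ while $\norm{Df^{n/2}(x)}$ is huge (expansion followed by contraction), making the Bowen ball exponentially small even though the terminal derivative is trivial. No $C^2$ bounded-distortion argument repairs this, since the distortion of $f^n$ involves $D^{O(n)}$ and swamps the $e^{nh}$ gain. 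Consequently your volume-packing argument does not produce a set of positive measure with $\norm{Df^n}\ge e^{nh/2}$, and the rest of your outline has no input to work with.

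The paper circumvents this by running the logic in the opposite direction and using the Bowen-ball hypothesis only at the \emph{end}. It first builds, without reference to $N_f(n,\delta,\varepsilon)$, an inductive decomposition $X=M_k\sqcup E_k$ along the tower-exponential scales $q_0<\cdots<q_{K+1}\sim n$: points of $M_k$ have $\norm{Df^{q_k}}\le A^{q_k\theta_0^k}$ (Lemma~\ref{lem small der}), while $E_{k+1}$ consists of points that either recur often to $E_k$ or are already shadowed by a $\lambda_k$-hyperbolic periodic orbit via the closing lemma (the sets $\mathcal F_k$). The key is that membership in $M_{K+1}$ controls the derivative at \emph{every} scale $q_0,\dots,q_K$ simultaneously, and this multi-scale control (Lemma~\ref{lem big ball}) is exactly what is needed to show that a genuine Euclidean ball of radius $e^{-2nh/5}\delta$ sits inside each Bowen $(n,\delta)$-ball centered in $M_{K+1}$. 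Only now does the hypothesis $N_f(n,\delta,\varepsilon)>e^{nh}$ enter, forcing $|E_{K+1}|\ge\varepsilon$; unwinding the recursion $|E_{k+1}|\le \eta^{-1}|E_k|+l_k|\mathcal G_k|$ then shows some $\mathcal G_i$ is large, and the pigeonhole argument on hyperbolic periodic points (your step~4, which is essentially Proposition~\ref{lem alt}) finishes. So the closing-lemma induction is not fed by a derivative-growth set extracted from the Bowen hypothesis; rather it is what \emph{creates} the set on which Bowen balls are provably large.
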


Theorem \ref{mainA} gives positive topological entropy from complexity growth at an explicit large time scale.
Some adaptation of the proof also allows us to conclude positive topological entropy from derivative growth at an explicit time scale along a single, yet not too concentrated, orbit. To precisely formulate such a result, we introduce the following notation.

\begin{defi}
Given a continuous map $f : X \to X$, for any subset $I \subset \Z$, any $x \in X$, we set $Orb(f,x,I) = \{ f^{i}(x) | i \in I\}$. 

For constants $c, \delta > 0, \varepsilon \in (0,1)$, we say that $x$  is \textit{ $(n, c,\delta,\varepsilon)$-sparse }  if for any subset $I \subset \{0,\cdots, n-1 \}$ satisfying $|I| > c n$ we have    $m(B( Orb(f,x, I), \delta)) > \varepsilon$. 
\end{defi}

\begin{Main}\label{mainB}
There exists a constant $C_0 = C_0(X) > 0$ such that the following is true.
For any $A, D > 1$, $h \in ( 0, \log A]$, $\varepsilon \in (0,1)$, let
\begin{eqnarray*}
P_0  = \varepsilon^{-1}e^{C_0(\log(\frac{\log A}{h}))^2 + C_0}, \quad P_1 =  e^{C_0 h^{-1} \log D \log A}.
\end{eqnarray*}
 If $f : X \to X$ is a  $C^2$ diffeomorphism preserving $m$ that satisfies \eqref{norms}, and there exists $x \in X$ such that for some $n \geq  Tower(P_0,P_1,K_0)$, where $ K_0 =  \lceil C_0 \log(\frac{\log A}{h}) + C_0 \rceil$, we have
\begin{itemize} 
\item  $\norm{Df^{n}(x)} > e^{nh}$,
\item $x$ is $(n, Tower(P_0,P_1,K_0-1)^{-1}, D^{-Tower(P_0, P_1, K_0-1)},\varepsilon)$-sparse,
\end{itemize} 
then $f$ has positive topological entropy.
\end{Main}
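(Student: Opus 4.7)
The plan is to follow the strategy outlined for Theorem~\ref{mainA}, but to feed the machine with a single orbit instead of a family of Bowen balls: the sparsity hypothesis will play the role formerly played by the covering lower bound $N_f(n,\delta,\varepsilon) > e^{nh}$. First, from $\norm{Df^n(x)} > e^{nh}$ and the telescoping identity $Df^n(x) = \prod_{i=0}^{n-1} Df(f^i x)$, a pigeonhole argument at an appropriately chosen intermediate scale $q$ produces a large set of indices $i \in \{0,\ldots,n-q\}$ for which $\norm{Df^q(f^i x)} \gtrsim e^{qh'}$ with $h' \asymp h$. At each such $i$ one obtains an orbit segment ripe for the generalized effective closing lemma (the key tool already used in the proof of Theorem~\ref{mainA}, generalizing \cite[Theorem 4]{AFLXZ}), which produces a hyperbolic periodic point shadowing the segment and whose local stable and unstable manifolds have a size controlled by $D$ and $A$.

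The crux is to extract from this family enough distinct periodic orbits, situated in different regions of $X$, so that transverse homoclinic or heteroclinic intersections occur and a horseshoe can be built via Smale--Birkhoff. Here the $(n,c,\delta,\varepsilon)$-sparsity enters directly: if the set of candidate indices producing periodic orbits clustered in a small region of $X$ were too large, then the corresponding set $I \subset \{0,\ldots,n-1\}$ of starting times would have $m(B(Orb(f,x,I),\delta)) < \varepsilon$, contradicting sparsity as soon as $|I| > cn$. Hence a definite positive fraction of the indices produce periodic points spread across $X$, yielding pairs of hyperbolic periodic orbits whose local invariant manifolds of guaranteed size are forced into transverse intersection for purely geometric reasons.

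The main obstacle, and the reason for the tower-exponential height $K_0 \sim \log(\log A / h)$, is that a single application of the closing lemma does not suffice when the derivative growth along the orbit is irregular, or when $\norm{Df^q(f^i x)}$ is much smaller than $\norm{Df}^{\theta q}$. One handles this exactly as in \cite{AFLXZ} by iterating the generalized closing lemma at a sequence of refined scales $q_1 < q_2 < \cdots < q_{K_0}$, each chosen so that the closing lemma at scale $q_{k+1}$ applies on a definite fraction of the windows on which the scale-$q_k$ estimates hold, with the hyperbolicity rate improving toward $h$ at each step. The quantitative parameters $c = Tower(P_0,P_1,K_0-1)^{-1}$ and $\delta = D^{-Tower(P_0,P_1,K_0-1)}$ in the sparsity hypothesis are precisely calibrated so that the spreading provided by sparsity survives all $K_0$ inductive refinements, while the final scale $q_{K_0}$ is of order $Tower(P_0,P_1,K_0)$, which forces the lower bound $n \geq Tower(P_0,P_1,K_0)$. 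Managing the accumulation of error across these $K_0$ levels, so that the final periodic points still have usable local manifolds and still exhibit the geometric spreading needed to produce a horseshoe, is where the bulk of the technical work lies.
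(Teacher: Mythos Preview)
Your proposal is essentially correct and follows the same approach as the paper: the paper reuses the iterative decomposition $X = M_k \sqcup E_k$ from Theorem~\ref{mainA}, but now tracks the empirical measure $\mu_{x,n}$ along the single orbit rather than Lebesgue measure, shows $\mu_{x,n}(E_K) \geq h/(2\log A)$ from the derivative hypothesis (your pigeonhole step), deduces that some $\mathcal G_i$ is visited with frequency $\geq l_i^{-2}$, and then invokes sparsity exactly as you describe to convert this into $m(B(\mathcal G_i,\delta_0)) > \varepsilon$ and hence a heteroclinic intersection. The only refinement worth noting is that the paper phrases the iteration contrapositively---bounding the derivative on $M_k$ downward via the dichotomy ``recur to $E_k$ or land in $\mathcal F_k$''---rather than building hyperbolicity up, which makes the recursive inequality for $\mu_{x,n}(E_{k+1})$ cleaner.
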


Observe that a non-concentration condition, such as the second condition of Theorem \ref{mainB}, is necessary to conclude positive entropy, for otherwise $x$ could just belong to a  hyperbolic periodic orbit with a small period.

We remark that Theorem \ref{mainA} does not hold in general in dimension at least $4$ as the following example shows.

\begin{example} Denote by  $\{ g_t \}_{t \in \R}$  a geodesic flow on $Y := S_1M$, the unit tangent space of a hyperbolic surface $M$, preserving the Liouville measure $\mu$. We set $h_0 := h_{\mu}(g_1) > 0$. Let $\T = \R / \Z$ be the circle and let $\varphi \in C^\infty(\T)$ be a function such that  $\int_{\T} \varphi d\theta = 0$ and  $\varphi |_{[0, \frac{1}{2}]} \equiv 1$. For  any $\alpha \in \R$, denote by $R_{\alpha} : \T \to \T$ the rotation $\theta \mapsto \theta + \alpha[1]$, and consider the $C^2$ map $f_{\alpha} : \T \times Y \to \T \times Y$ defined as follows,
\begin{eqnarray*}
f_{\alpha}(\theta,x) = (\theta+\alpha, g_{\varphi(\theta)}(x)),\quad \forall (\theta,x) \in \T \times Y.
\end{eqnarray*}
\end{example}

Observe that for any $\alpha \in \R$, $f_{\alpha}$ preserves the smooth measure $\nu := Leb_{\T} \times \mu$. It is clear that  $\sup_{\alpha \in \T} \norm{f_{\alpha}}_{C^2} < \infty$.
Moreover, we have the following that shows that Theorem \ref{mainA} does not hold in general in dimension at least $4$.

\begin{prop} We have that \label{prop counter}
\begin{itemize} 
\item[(1)] For any $\alpha\in \R - \Q$, the topological entropy $h_{top}(f_{\alpha})=0$.

\item[(2)] There exists $\delta > 0$ such that for any $\varepsilon \in (0,1)$, any integer $n_0 > 0$, there exists $n > n_0$, $\bar{\alpha} \in \T$, such that for any $\a \in [0, \bar{\a}]$ it holds that $N_{f_{\alpha}}(n, \delta, \varepsilon) > e^{\frac{nh_0}{2}}$. 
\end{itemize} 
\end{prop}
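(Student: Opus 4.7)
I plan to treat the two parts separately, using throughout the explicit iteration formula $f_\alpha^i(\theta, x) = (\theta + i\alpha, g_{S_i\varphi(\theta)}(x))$, where $S_i\varphi(\theta) := \sum_{j=0}^{i-1}\varphi(\theta + j\alpha)$.

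For part~(1), the target is $h_\mu(f_\alpha) = 0$ for every $f_\alpha$-invariant Borel probability $\mu$; the variational principle then forces $h_{\mathrm{top}}(f_\alpha) = 0$. Since $R_\alpha$ is uniquely ergodic on $\T$ for $\alpha \in \R \setminus \Q$, one has $\pi_*\mu = \mathrm{Leb}_\T$. As $h_{\mathrm{Leb}_\T}(R_\alpha) = 0$, Abramov--Rokhlin yields $h_\mu(f_\alpha) = h_\mu(f_\alpha \mid \pi^{-1}\mathcal{B}_\T)$. On the fiber over $\theta$, the dynamics at step $i$ is $g_{S_i\varphi(\theta)}$; because $\int\varphi = 0$ and $R_\alpha$ is uniquely ergodic, uniform equidistribution gives $\sup_\theta \max_{0\leq i\leq n}|S_i\varphi(\theta)| = o(n)$. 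Since $h_{\mathrm{top}}(g_1) < \infty$, the fiber refinement at step $n$ is coarser than $\bigvee_{|t|\leq o(n)} g_{-t}\xi_Y$ and can be covered by $e^{o(n)}$ atoms, forcing the conditional fiber entropy to vanish.

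For part~(2), the strategy is a perturbative reduction to $\alpha = 0$. On the invariant block $[0,1/2]\times Y$, $f_0 = \mathrm{id}_\T \times g_1$, and the fiber dynamics $g_1$ is ergodic with $h_\mu(g_1) = h_0$. I would choose $\varphi$ so that, in addition to $\varphi|_{[0,1/2]}\equiv 1$ and $\int\varphi=0$, one has $\varphi \equiv -1$ on $[1/2+\eta_1, 1-\eta_1]$ (smoothly joined, with $\eta_1$ small), so that $|\varphi|=1$ on a Lebesgue-large subset of $\T$. Then I would fix $\delta > 0$ small enough that, by Katok's entropy formula applied to the ergodic systems $(g_{\pm 1},\mu)$, one has $N_{g_{\pm 1}}(n, 2\delta, \varepsilon') \geq e^{n(h_0-\eta)} > e^{nh_0/2}$ for any $\varepsilon'\in(0,1)$, $\eta < h_0/2$, and all sufficiently large $n$.

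Given $\varepsilon, n_0$, pick $n\geq n_0$ large. For any $f_0$-Bowen cover $\{B_j\}_{j=1}^N$ with $\nu(\bigcup B_j)\geq 1-\varepsilon$, set $\mathcal{A}(\theta^*) := \mu\{x : (\theta^*,x)\in\bigcup B_j\}$, so that $\int_\T \mathcal{A}\,d\theta^* \geq 1-\varepsilon$ by Fubini. For $\theta^*$ lying at distance $\geq 2\delta$ from the transition zones of $\varphi$, only centers $\theta_j$ in the same flat block can contribute to the fiber, and the intersection $B_j \cap (\{\theta^*\}\times Y)$ then equals a standard Bowen ball $B_{g_{\pm 1}}(x_j, n, \delta)$. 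A pigeonhole step on the two flat sub-intervals $(2\delta, 1/2-2\delta)$ and $(1/2+\eta_1+2\delta, 1-\eta_1-2\delta)$ produces a good fiber $\theta^*$ with $\mathcal{A}(\theta^*) \geq 1-\varepsilon^*$ for some $\varepsilon^* \in (\varepsilon, 1)$ (for instance $\varepsilon^* = (1+\varepsilon)/2$ after choosing $\eta_1, \delta$ small enough relative to $1-\varepsilon$), and Katok then gives $N \geq N_{g_{\pm 1}}(n,\delta,\varepsilon^*) > e^{nh_0/2}$. Finally, to pass to small $\alpha > 0$: choosing $\bar\alpha \leq \delta/(Cn)$ with $C$ bounding $\|Df\|_{C^0}\cdot\|\varphi\|_{C^0}$ forces $\|f_\alpha^i - f_0^i\|_\infty < \delta/2$ for $0\leq i\leq n$, so each $(n,\delta)$-Bowen ball of $f_\alpha$ sits inside an $(n,2\delta)$-Bowen ball of $f_0$ with the same center, giving $N_{f_\alpha}(n,\delta,\varepsilon) \geq N_{f_0}(n,2\delta,\varepsilon) > e^{nh_0/2}$. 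The delicate step is the Fubini/pigeonhole argument — converting the integrated lower bound into a pointwise one on a fiber in the good region — which is exactly what dictates the choice of $\varphi$ with $|\varphi|$ close to $1$ on most of $\T$.
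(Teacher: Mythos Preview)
Your proposal is essentially correct, but it takes a noticeably more elaborate route than the paper, especially in part~(2).

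For part~(1), both you and the paper invoke Abramov--Rokhlin; your sketch of why the fiber entropy vanishes via $\sup_\theta\max_{i\le n}|S_i\varphi(\theta)|=o(n)$ is fine in spirit, though the line ``coarser than $\bigvee_{|t|\le o(n)} g_{-t}\xi_Y$'' needs interpretation (a continuum of pullbacks would have infinitely many atoms; you mean integer times plus a bounded distortion). The paper, however, gives a much shorter alternative: since $\alpha$ is irrational, the Denjoy--Koksma inequality applied along the denominators $q_k$ of $\alpha$ gives $S_{q_k}\varphi\to 0$ uniformly in $C^\infty$, so $f_\alpha^{q_k}\to\mathrm{Id}$ in $C^\infty$; Ruelle's inequality then forces all Lyapunov exponents, and hence the entropy, to vanish. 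This bypasses the relative-entropy bookkeeping entirely.

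For part~(2) the paper's argument is a two-line observation: once $\alpha$ is so small that $i\alpha\in[0,\tfrac12]$ for $0\le i\le n$, orbits starting with $\theta$-coordinate near $0$ see $\varphi\equiv 1$ at every step, so the fibre dynamics is literally $g_1$, and one reads off $N_{f_\alpha}(n,\delta,\varepsilon)\ge N_{g_1}(n,\delta,\varepsilon)$. Your approach --- imposing the extra normalisation $\varphi\equiv -1$ on most of $[\tfrac12,1]$, running a Fubini/pigeonhole argument on the fibres of $f_0$, and then perturbing to small $\alpha$ --- is heavier but also more honest: the paper's claim that $f_\alpha^i(\theta,x)=(\theta+i\alpha,g_i(x))$ for \emph{all} $(\theta,x)\in\T\times Y$ is literally false (it only holds when the $\theta$-orbit stays in $[0,\tfrac12]$), and without controlling the zero set of $\varphi$ one cannot push the inequality through for $\varepsilon$ close to $1$. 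Your extra hypothesis on $\varphi$ is exactly what closes that gap, and it is compatible with the freedom the example allows in choosing $\varphi$.

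Two small points to fix in your perturbation step: the $C^0$ distance $\|f_\alpha^i-f_0^i\|_\infty$ involves $|S_i\varphi(\theta)-i\varphi(\theta)|\le \tfrac12 i^2\alpha\|\varphi'\|_\infty$, so you need $\bar\alpha\lesssim \delta/n^2$ rather than $\delta/n$; and the constant should involve $\|\varphi'\|_\infty$, not $\|Df\|_{C^0}\cdot\|\varphi\|_{C^0}$. Neither affects the structure of the argument.
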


\begin{proof}
Abramov Rohlin formula for the entropy of a skew product yields (1) \cite{AR}. To see (1) directly, let $(q_n)_{n \in \N}$ be the sequence of denominators of the best rational approximations of $\alpha$. Then by Denjoy-Koksma theorem, the partial sums $S_{q_n}\varphi$ defined as $S_{q_n}\varphi(\theta) := \sum_{i=0}^{q_n-1} \varphi(\theta + i \alpha)$,$\forall \theta \in \T$, converge uniformly in the $C^\infty$ topology to $0$, as $n$ tends to infinity. By direct computations, we see that
\begin{eqnarray*}
f_{\alpha}^{q_n}(\theta,x) = (\theta + q_n\alpha, g_{S_{q_n}(\theta)}(x)),\quad \forall (\theta,x) \in \T \times Y.
\end{eqnarray*}
This implies that $f_{\alpha}^{q_n}$ converge to $Id$ in the $C^{\infty}$ topology, as $n$ tends to infinty. By Ruelle's entropy inequality, such convergence can  happen only if $h_{top}(f_{\alpha}) = 0$. 

To see (2), we notice that by $h_{\mu}(g_1) = h_0 > 0$, there exists $\delta > 0$, such that for any $\varepsilon \in (0,1)$, any $n_0 > 0$, there exists $n > n_0$ such that $N_{g_1}(n, \delta, \varepsilon) > e^{ \frac{nh_0}{2}}$. Then by choosing $\alpha$ to be sufficiently close to $0$, so that $i\alpha \in [0, \frac{1}{2}]$ for all $0 \leq i \leq n$, we have $f_{\alpha}^{i}(\theta,x) = (\theta+i\alpha, g_{i}(x))$ for any $(\theta,x) \in \T \times Y$, any $0 \leq i \leq n$. Then it is direct to see that $N_{f_{\alpha}}(n, \delta, \varepsilon) \geq N_{g_1}(n, \delta, \varepsilon) > e^{\frac{nh_0}{2}}$. This concludes the proof.
\end{proof}

\begin{nota}

For any $n \geq 1$, any $x \in X$, we will denote by $\mu_{x,n} = \frac{1}{n}\sum_{m=0}^{n-1}\delta_{f^{m}(x)}$.
For any $x \in X$, any linear subspace $E \subset T_{x}X$, any $r > 0$, we denote by  $B_{E}(r) = \{v \in E | \norm{v} < r \}$. For any subset $A \subset X$, any $r > 0$, we denote by $B(A, r) = \{x | d(x,A) < r\}$. For any measurable subset $K \subset X$, we use $|K|$ or $m(K)$ to denote the measure of $K$.

We will use $c, c_1, \cdots$ to denote generic positive constants which are allowed to vary from line to line, and may or may not depend on $X$, but independent of everything else. Under our notations, expressions like $cA \leq B \leq cA$ are legitimate. For two  variables $A,B > 0$, we denote $A \gg B$ ( resp. $A \ll B$ ) if we have $A \geq c B$ ( resp. $cA \leq B$ ) for some constant  $c$ as above.  

\end{nota}

\section{From hyperbolic points to positive entropy}

\begin{defi} \label{def hyp pts}
Let $g : X \to X$ be a $C^1$ diffeomorphism. For $ \alpha \in (0,\pi), r \in (0,1)$, a hyperbolic periodic point of $g$, denoted by $y \in X$, is said to be $( \alpha, r)-$hyperbolic if the following is true. Let $E^{s}(y), E^{u}(y)$ be respectively the stable and unstable direction at $y$. Then
\begin{enumerate}
\item The angle between $E^s(y)$ and $E^u(y)$ is at least $\alpha$,
\item The local stable (resp. local unstable ) manifold of $g$ at $x$ contains $\exp_{y}(graph(\gamma_s))$ (resp. $\exp_{y}(graph(\gamma_u))$), where $\gamma_s : B_{E^{s}(y)}( r) \to E^{u}(y)$  (resp. $\gamma_u : B_{E^{u}(y)}( r ) \to E^{s}(y)$) is a Lipschitz function such that $\gamma_s(0)=0$ and $Lip(\gamma_s) < \frac{1}{100}$ (resp. $\gamma_u(0) = 0$ and $Lip(\gamma_u) < \frac{1}{100}$).
\end{enumerate}
Moreover, we denote $\exp_y(graph(\gamma_s))$ ( resp. $\exp_y(graph(\gamma_u))$ )  by $\cWs_{r}(y)$ (resp. $\cWu_r(y)$).

For any $\alpha \in (0,\pi), r > 0$, the set of all $(\alpha,r)-$hyperbolic points of $g$ is denoted by $\mathcal{H}(g, \alpha,r)$. 
To simplify notations, for any $\lambda \in (0,1)$, a $(\lambda^{2}, \lambda^{3})-$hyperbolic point of $g$ is said to be $\lambda-$hyperbolic.
The set of all $\lambda-$hyperbolic points of $g$ is denoted by $\mathcal{H}(g, \lambda)$. 
\end{defi}

\begin{defi}[Heteroclinic intersection]
For any $C^1$ diffeomorphism $g : X \to X$, for any two distinct hyperbolic periodic points of $g$ denoted by $p,q$, we say that $p,q$ has a heteroclinic intersection, if the stable submanifold of $p$ intersects transversely the unstable manifold of $q$, and the unstable submanifold of $p$ intersects transversely the stable manifold of $q$.
\end{defi}

The following simple lemma shows that for any given $\alpha,r$, there cannot be too many $(\alpha, r)-$points unless there is a heteroclinic intersection.

\begin{prop}\label{lem alt} 
There exist $C_1,C_2 > 1$ depending only on $X$ such that, for any $\alpha  \in (0,\pi)$, any $0 < r < C_1^{-1} $, if a $C^1$ diffeomorphism $g : X \to X$ satisfy $|\cal{H}(g, \alpha, r)| > C_2 r^{-2} \alpha^{-4}$, then there exists a heteroclinic intersection for $g$. In particular, $g$ has positive topological entropy.   In particular, if $\lambda \ll 1$ and $\mathcal{H}(g, \lambda) \gg \lambda^{-14}$, then there exists a heteroclinic intersection for $g$.
\end{prop}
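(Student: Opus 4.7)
The plan is a pigeonhole argument built on a geometric transversality lemma for pairs of hyperbolic points that are close in both position and direction.

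\textbf{Step 1 (Geometric lemma).} I would first establish: there exist universal constants $c_3, c_4 \in (0, 1)$, depending only on $X$, such that whenever $p \neq q$ are both $(\alpha, r)$-hyperbolic for $g$ with
\[
d(p, q) \leq c_3 r \alpha \quad\text{and}\quad \angle(E^{\ast}(p), E^{\ast}(q)) \leq c_4 \alpha \ \text{for}\ \ast \in \{s, u\},
\]
one has $\cWu_r(p) \pitchfork \cWs_r(q)$ and $\cWu_r(q) \pitchfork \cWs_r(p)$ at a point distinct from $p$ and $q$. To see this, work in the exponential chart at $p$, in which curvature distortion is $O(r^2)$ and thus negligible once $r$ is smaller than some threshold $C_1^{-1}$. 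In this chart, $\cWu_r(p)$ is the graph $t \mapsto t\, e^u_p + \eta_1(t)\, e^s_p$, and, after parallel transport of $T_qX$ to $T_pX$ along a minimizing geodesic, $\cWs_r(q)$ is $s \mapsto \tilde q + s\, e^s_q + \eta_2(s)\, e^u_q$, both with $\mathrm{Lip}(\eta_i) < 1/100$. The angle hypothesis combined with hyperbolicity gives $\angle(e^u_p, e^s_q) \geq \alpha - c_4 \alpha \geq \alpha / 2$, so the tangent lines through $p$ in direction $e^u_p$ and through $q$ in direction $e^s_q$ meet at a point $P_0$ with $d(P_0, p), d(P_0, q) \leq d(p, q)/\sin(\alpha/2) \leq 4 c_3 r$, safely inside the radius-$r$ graph domains. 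The signed perpendicular distance $\phi(t)$ from $\gamma_1(t) \in \cWu_r(p)$ to the tangent line through $q$ is then a Lipschitz function with $|\phi'(t)| \geq c \alpha$ (the transversality dominates the $1/100$-Lipschitz perturbation) and $|\phi(s_1)| = O(c_3 c_4 r \alpha)$; the intermediate value theorem yields a zero at some $t^{\ast} \in (-r, r)$. Replacing the tangent line of $\gamma_2$ by $\gamma_2$ itself in the same estimate upgrades this zero to an honest transverse intersection of the two graphs, at a point of distance $O(c_3 r)$ from $p$ and $q$, distinct from both for $c_3$ small (and in any degenerate case, after an arbitrarily small perturbation, which is harmless).

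\textbf{Step 2 (Pigeonhole and conclusion).} Cover $X$ by $N_1 = O((r\alpha)^{-2})$ geodesic balls of radius $c_3 r \alpha / 2$, and over each ball partition the projective direction space $\mathbb{RP}^1 \times \mathbb{RP}^1$ of (stable, unstable) direction pairs into $N_2 = O(\alpha^{-2})$ cells of angular diameter at most $c_4 \alpha / 2$. This yields $N_1 N_2 \lesssim r^{-2} \alpha^{-4}$ joint cells. Choosing $C_1$ large enough that $r < C_1^{-1}$ suppresses all chart and trivialization errors, and $C_2$ larger than the implicit constant, the hypothesis $|\mathcal{H}(g, \alpha, r)| > C_2 r^{-2} \alpha^{-4}$ places two distinct points $p, q \in \mathcal{H}(g, \alpha, r)$ in a common joint cell, so the lemma applies. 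If $p, q$ lie on distinct periodic orbits, this is directly a heteroclinic intersection; if they lie on the same orbit, the transverse intersection is a transverse homoclinic point which by Smale--Birkhoff produces a horseshoe, inside which one finds plenty of distinct periodic orbits with mutual heteroclinic intersections. In either case $g$ has positive topological entropy, yielding the first ``in particular'' of the statement. The second ``in particular'' follows by substituting $\alpha = \lambda^2, r = \lambda^3$, giving $r^{-2} \alpha^{-4} = \lambda^{-14}$.

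\textbf{Main obstacle.} The delicate step is the calibration in Step 1: the tangent-line crossing $P_0$ drifts from $p, q$ by $\sim c_3 r$, the Lipschitz graphs deviate from their tangent lines by $\sim r / 100$ in the perpendicular direction, and the transversality angle is only $\sim \alpha$, so a naive perpendicular discrepancy of order $r$ could translate into a parameter shift of order $r / \alpha$ and push the intersection outside the graph domain. The scaling $d(p, q) \leq c_3 r \alpha$ is exactly the threshold that makes the discrepancy $\phi(s_1)$ scale like $c_3 c_4 r \alpha$---one factor of $\alpha$ smaller than naive, because the perpendicular offset $\eta_i(s_i)$ lies in the $E^s(p)$- (resp. $E^u(q)$-) direction, which makes angle only $\sim \alpha$ with the normal to $e^s_q$ (resp. $e^u_p$). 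After division by $|\phi'| \sim \alpha$ the intersection shift is $O(c_3 r)$, keeping everything inside the radius-$r$ domain; a laxer spatial scale would fail for small $\alpha$.
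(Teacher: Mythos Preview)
Your proposal is correct and follows essentially the same approach as the paper's proof. Both arguments run a pigeonhole on the bundle of triples (position, stable direction, unstable direction) with spatial scale $\sim r\alpha$ and angular scale $\sim \alpha$, yielding $\sim r^{-2}\alpha^{-4}$ cells, and then invoke a geometric transversality lemma for two $(\alpha,r)$-hyperbolic points landing in the same cell. The paper packages the pigeonhole via a smooth measure on $\widehat X=\{(x,v_1,v_2)\}$ and the sets $Q_\psi$, while you cover $X$ by balls and partition the $\mathbb{RP}^1\times\mathbb{RP}^1$ fibre separately; this is a cosmetic difference. For the geometric step the paper places each local manifold in a cone $C(\hat v_i^{u/s},\tfrac{1}{20}\alpha_i)$ and defers to ``straightforward calculations'', whereas you carry out those calculations explicitly via a signed-distance function and the intermediate value theorem. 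Your ``Main obstacle'' paragraph correctly isolates the one nontrivial point---that the Lipschitz deviation $\eta_i$ lives in the $E^{s/u}$ direction, which is $O(\alpha)$-close to the opposing tangent line, so the perpendicular discrepancy picks up an extra factor of $\alpha$ and the intersection stays inside the radius-$r$ domain; this is exactly what makes the spatial scale $r\alpha$ (rather than $r$) the right one, and is what the paper's cone containment encodes. Your handling of the same-orbit case via Smale--Birkhoff is a small extra care the paper does not spell out.
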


\begin{proof}

In order to be able to measure the angles between vectors in nearby tangent spaces, we cover the surface $X$ by finitely many $C^{\infty}$ local charts  $\{ \psi : [-1,2]^2 \to X \}_{\psi \in \cal{B}}$ indexed by $\cal{B}$.  
For any three distinct points $x,y,z \in \R^2$, let $\angle(x,y,z)$ denote $\angle(x-y, z-y)$.
For any $\beta > 0$, any $v \in \R^2 \setminus \{0\}$, let $C(v, \beta) :=\{ u | \angle(u,  v) < \beta \} \bigcup \{0\}$.

 We will choose $\{ \psi : [-1,2]^2 \to X \}_{\psi \in \cal{B}}$ and  a constant $c_0 > 0$, depending only on X, such that for any $x \in X$, any $\psi \in \cal{B}$ such that $x \in \psi([-0,1]^2)$, for any $v_1, v_2 \in T_xX \setminus \{0\}$, set $\hat{x} := \psi^{-1}(x), \hat{v}_1 := D\psi^{-1}(x,v_1), \hat{v}_2 := D\psi^{-1}(x,v_2)$, then  :
\begin{enumerate}
\item
$2^{-1} \angle(v_1, v_2) \leq \angle( \hat{v}_1, \hat{v}_2) \leq 2 \angle(v_1,v_2)$,
\item If $\norm{v_1}, \norm{v_2}  < 2c_0^{-1}$, then $\psi^{-1} exp_x(v_i)$ is defined and
\begin{eqnarray*}
2^{-1} \angle(v_1,v_2) \leq \angle( \psi^{-1}\exp_x(v_1) , \hat{x}, \psi^{-1}\exp_x(v_2) ) \leq 2 \angle(v_1,v_2).
\end{eqnarray*}
\end{enumerate}
 We fix an arbitrary smooth measure $\hat{m}$ on compact manifold
\begin{eqnarray*}
\widehat{X} = \{(x,v_1,v_2) | x \in X, v_1,v_2 \in T_xX, \norm{v_1}= \norm{v_2}=c_0^{-1}\}.
\end{eqnarray*}
Let $c_1 > 0$ be a large constant to determined later, and for any $(x,v_1,v_2) \in \widehat{X}$, any $\psi \in \cal B$ so that $x \in \psi((0,1)^2)$ and set $$Q_{\psi}(x,v_1,v_2) = \{(y, u_1,u_2) \in \widehat{X} | |\hat{x} - \hat{y}| <\frac{r \alpha}{c_1} , \angle(\hat{v}_1, \hat{u}_1), \angle(\hat{v}_2 , \hat{u}_2) < \frac{\alpha}{40}  \}.$$
Then there exists $c_2  > 0$ depending only on $X, c_1$, such that for all $(x,v_1,v_2) \in \widehat{X}$, any $\psi \in \cal B$ so that $x \in \psi((0,1)^2)$, we have
$$\hat{m}(\psi(Q_{\psi}(x,v_1,v_2))) > c_2^{-1}r^2\alpha^4.$$

By pigeonhole principle,  there exists a constant $c_3 > 0$ depending only on $X, c_2$, such that whenever $|\cal H(g, \alpha,r)| > c_3 r^{-2}\alpha^{-4}$, there exists a chart $\psi \in \cal{B}$, $(y_i, v^{s}_i, v^{u}_i) \in \widehat{X}$, $i=1,2$ so that
\begin{enumerate}
\item $y_1,y_2 \in \cal{H}(g, \alpha, r) \bigcap \psi((0,1)^2)$ are two distinct points;
\item for $i=1,2$, $\angle(v^{s}_i, v^{u}_i) \leq \frac{\pi}{2}$,  and $v_i^s$ (resp. $v_i^u$) lies in the stable (resp. unstable) direction of $y_i$;
\item $Q_{\psi}(y_1,v_1^s,v_1^u) \bigcap Q_{\psi}(y_2,v_2^s,v_2^u) \neq \emptyset.$
\end{enumerate}
 This implies that
$|\hat{y}_1 - \hat{y}_2| < \frac{2r \alpha}{c_1}$, $\angle(\hat{v}_1^{s}, \hat{v}_2^{s}) < \frac{\alpha}{20}$ and $\angle( \hat{v}_1^{u}, \hat{v}_2^{u}) < \frac{\alpha}{20}$.

For $i=1,2$, let us denote $\alpha_i = \angle(v_i^u,  v_i^s)$. By the definition of $\cal H(g, \alpha, r)$ we have $\alpha_1, \alpha_2 \geq \alpha$.
Then  $\angle(\hat{v}_i^u,  \hat{v}_i^s)  \geq 2^{-1}\alpha_i$ for $i=1,2$. Moreover for $r \ll 1$, we have $\psi^{-1}(\cW^{u}_r(y_i) ) \subset \hat{y}_i + C(\hat{v}_i^{u}, \frac{1}{20} \alpha_i)$ since there exists $\gamma_u : B_{E^u(y_i)}(r) \to E^s(y_i)$ with $Lip(\gamma_u) < \frac{1}{100}$, such that $\cWu_r(y_i) = exp_{y_i} graph(\gamma_u)$ and $graph(\gamma_u) \subset C(v_i^u, \frac{1}{40} \alpha_i)$. Similarly, we have $\psi^{-1}(\cW^{s}_r(y_i)) \subset \hat{y}_i +  C(\hat{v}_i^s, \frac{1}{20} \alpha_i)$.

By straightforward calculations, when $c_1$ is chosen to be sufficiently large, $y_1$,$y_2$ above have a heteroclinic intersection. Thus for any $r \ll 1$, any $C^1$ diffeomorphism $g : X \to X$ so that  $|\cal{H}(g, \alpha, r)| \gg r^{-2} \alpha^{-4}$, there exists a heteroclinic intersection for $g$.
 It is a standard fact that for $C^1$ surface diffeomorphism, the existence of a heteroclinic intersection implies positive topological entropy.  This concludes the proof.
\end{proof}

\section{A closing lemma}

\begin{defi}
For any $\eta > 0$, any integer $l > 0$, any $C^0$ map $g : X \to X$, any subset $Y \subset X$, a point $x \in X$ is said to be $(\eta, l,g)-$recurrent for $Y$ if we have
\begin{align*}
\frac{1}{l}|\{ 0 \leq j  \leq l-1| g^{j}(x) \in Y \}| > \eta.
\end{align*}
For any subset $Y \subset X$, we denote by
\begin{eqnarray*}
\mathcal{R}(Y, \eta, l, g) := \{ (\eta, l, g)-\mbox{recurrent points for $Y$ }\}.
\end{eqnarray*}
\end{defi}

For any $\lambda, \xi > 0$, we set 
\begin{eqnarray} 
\label{defG} \cal{G}(\lambda, \xi, g):= \bigcup_{y \in \mathcal{H}(g, \lambda)} B( \cWu_{\lambda^{3}}(y), \xi).
\end{eqnarray}

By our definition, we clearly have $\cal{G}(\lambda, \xi, g) = \cal{G}(\lambda, \xi, g^k)$ for any $ k \geq 1$, since $\mathcal{H}(g, \lambda) = \mathcal{H}(g^k, \lambda)$ for any $k \geq 1$.

Theorem 4 in \cite{AFLXZ} can be strengthened to prove the following proposition. 

\begin{prop} \label{lem hyp pt}
There exist $C = C(X) > 1$, and an absolute constant $\theta_0  \in (\frac{1}{2},1)$ such that the following is true. For each $\Delta \geq 1$, we set
\begin{eqnarray} \label{def eta L}
\eta = \eta(  \Delta)  :=  C^{-1}\Delta^{-2}\in (0,1)
\end{eqnarray}
Let $g : X \to X$ be a $C^2$ diffeomorphism preserving $m$. If for $A_1 \geq C$, $D_1 \geq A_1$, an integer $q \geq D_1^{C \Delta }$ and  $x \in X$, we have the following :
\begin{enumerate}
\item $\norm{Dg} \leq A_1^{\Delta}$ ,
\item $\norm{D^2g} \leq D_1$ ,
\item $x \notin \cal R(\{y | \norm{Dg(y)}>A_1^{\theta_{0}^{-1}} \}, \eta, q, g)$,
\item $\norm{Dg^{q}(x)} > A_1^{q}$,
\end{enumerate}
then 
\begin{eqnarray*}
x \in \cal F(A_1,D_1,  \Delta,  q , g) := \bigcup_{1 \leq j \leq q} g^{-j }( \cal{G}( D_1^{-C  \Delta}, A_1^{-\frac{ q }{2D_1^{C  \Delta}}}, g) ) 
\end{eqnarray*}

\end{prop}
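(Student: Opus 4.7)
The plan is to adapt the proof of Theorem 4 in \cite{AFLXZ} by replacing the uniform derivative bound used there with the statistical substitute provided by the sparsity condition $(3)$. The target is to locate a time $j_0 \in [1,q]$ at which $g^{j_0}(x)$ behaves like a Pesin-regular point with effectively controlled constants, and then apply a quantitative Anosov closing lemma around it.

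The first step is a Pliss-type selection applied to the sequence $a_i := \log \norm{Dg(g^i(x))}$. The uniform upper bound $a_i \leq \Delta \log A_1$, together with condition $(4)$ which gives $\sum_{i=0}^{q-1} a_i \geq q \log A_1$, yields by the classical Pliss lemma a set of forward hyperbolic times $j_0 \in [0,q]$ of density comparable to $\Delta^{-1}$ at which $\prod_{i=j_0}^{j_0+m-1} \norm{Dg(g^i(x))} \geq A_1^{m/2}$ for every $m \leq q-j_0$. Condition $(3)$ says that the globally bad set $\{i : \norm{Dg(g^i(x))} > A_1^{\theta_0^{-1}}\}$ has density at most $\eta = C^{-1}\Delta^{-2}$, which is much smaller than the Pliss density; a pigeonhole refinement then produces a single $j_0$ at which the Pliss property holds and whose forward orbit continues to visit the large-derivative set with density at most a comparable $\eta$. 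A symmetric backward Pliss argument, combined with volume preservation on the surface $X$ (which converts one-sided expansion by $A_1^{m/2}$ into transverse contraction by $A_1^{-m/2}$), gives a two-sided Pesin regular block around $g^{j_0}(x)$.

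At such a $j_0$, an effective stable/unstable manifold construction, carried out by direct graph transform as in \cite{AFLXZ}, produces transverse cone fields at $g^{j_0}(x)$ whose aperture and Lipschitz constants are controlled by $D_1^{-C\Delta}$; the exponent $C\Delta$ absorbs the worst-case $\norm{D^2g}$-distortion incurred on each of the rare visits to a large-derivative region. A contraction-mapping argument applied to the intersection of the local stable and unstable plaques of $g^{j_0}(x)$ then produces a periodic point $y$ of period at most $q$ with $d(y,g^{j_0}(x)) \leq A_1^{-q/(2D_1^{C\Delta})}$. Propagating the cone estimates from $g^{j_0}(x)$ to $y$ shows $y \in \mathcal{H}(g, D_1^{-C\Delta})$ and $g^{j_0}(x) \in B(\cWu_{\lambda^{3}}(y), A_1^{-q/(2D_1^{C\Delta})})$; since $j_0 \in [1,q]$, this places $x \in \cal F(A_1,D_1,\Delta,q,g)$.

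The delicate step, and the main obstacle, is the uniform bookkeeping through the graph-transform procedure: each visit to a point with $\norm{Dg} > A_1^{\theta_0^{-1}}$ degrades cone apertures and plaque sizes by a factor comparable to $D_1^{\Delta}$, and summing these degradations along $[j_0,q-1]$ requires the number of such visits to be at most of order $\Delta^{-1}(q-j_0)$, which is precisely what the density bound $\eta \leq C^{-1}\Delta^{-2}$ furnishes. This is why the result cannot be obtained by directly bootstrapping Theorem 4 of \cite{AFLXZ}, which is confined to $\Delta$ close to $1$: the Pliss selection and the sparsity control must be interwoven with the graph transform from the outset, keeping the two exponent scales $\log A_1$ and $\Delta \log A_1$ carefully separated throughout.
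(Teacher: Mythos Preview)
Your proposal has a genuine gap: you never produce a \emph{return} of the orbit close to itself, and without that there is no mechanism for a periodic point to appear. The sentence ``a contraction-mapping argument applied to the intersection of the local stable and unstable plaques of $g^{j_0}(x)$ then produces a periodic point $y$'' is where the argument breaks: the local stable and unstable plaques at a single point $g^{j_0}(x)$ meet only at $g^{j_0}(x)$ itself. An Anosov-type closing lemma needs two times $i_1 < i_2$ at which $g^{i_1}(x)$ and $g^{i_2}(x)$ are close (in position \emph{and} in the stable/unstable directions), so that $g^{i_2-i_1}$ acts as a hyperbolic return map on a box near $g^{i_1}(x)$; only then does a contraction-mapping or graph-transform argument yield a fixed point of $g^{i_2-i_1}$. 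Your single Pesin-regular time $j_0$ gives good local charts but no return.

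This also affects your reading of condition $(3)$. Its primary role in the paper is not to control distortion in the graph transform, but to upgrade the Pliss density: classical Pliss with the crude bound $a_i \leq \Delta \log A_1$ gives only density $\sim \Delta^{-1}$ of good times, which is far too sparse to force a close return by pigeonhole on a $D_1^{-O(\Delta)}$-mesh of $(x,E^s,E^u)$-data. The paper proves a variant of Pliss (Lemma \ref{lem improved pliss}) in which the sparsity hypothesis $(3)$ replaces the denominator $N-\theta_2$ by $1-\theta_2$, yielding more than $q/2$ two-sided good times. With that many good times, pigeonhole on a subsequence of spacing $D_1^{-C_1\Delta}q$ produces $i_1,i_2$ with $d(g^{i_1}(x),g^{i_2}(x))$ and the angle discrepancies all below $D_1^{-C_1\Delta/200}$, after which the box/cone machinery of \cite{AFLXZ} gives the hyperbolic periodic point $z$ and the inclusion $g^{i_2}(x)\in B(\cWu_{D_1^{-3C_1\Delta}}(z),A_1^{-q/(2D_1^{C_1\Delta})})$. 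Your outline should insert this recurrence step between the Pliss selection and the graph transform, and use condition $(3)$ in the Pliss step rather than only in the distortion bookkeeping.
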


The proof of Proposition \ref{lem hyp pt} follows closely that of Theorem 4 in \cite{AFLXZ}. In our case we need to get more precise informations on the regularity of local invariant manifolds, as well as the location of the hyperbolic point. We defer its proof to Appendix A relying on many estimates from \cite{AFLXZ}.

\section{Estimates along a tower exponential sequence}\label{Estimates along a tower exponential sequence}

Without loss of generality, we will always assume  that $D,A$ in Theorem \ref{mainA}, \ref{mainB} satisfy
\begin{eqnarray} \label{cond 1}
D > A \gg 1.
\end{eqnarray}
Then we can assume that for any $C^2$ map $g : X \to X$ such that $\norm{Dg}, \norm{D^2g} \leq D$, we have 
\begin{eqnarray*}  
\norm{D^2g^k} < D^{k}, \quad \forall k \geq 1.
\end{eqnarray*}

Let $C, \theta_0$ be defined in Proposition \ref{lem hyp pt}. For $D,A,h$ given in Theorem \ref{mainA} or \ref{mainB},  set $C'$ to be a large positive constant depending only on $X$ to be determined later. We set
\begin{eqnarray}\label{def K Delta}
 \quad  \Delta = \frac{16 \log A}{h}, \quad K = \lceil \frac{\log(\frac{\Delta}{4})}{-\log \theta_0} \rceil \geq 2, \quad \eta = \eta(\Delta) \mbox{ ( see \eqref{def eta L}  )}.
\end{eqnarray}
Define
\begin{eqnarray} \label{HHH}
H = H (X, A, h)   := C' \Delta.
\end{eqnarray}

Given an integer $n \geq 1$, $\varepsilon \in ( 0, 1)$, we inductively define the following.
\begin{eqnarray}
q_0 &=&   \lceil \varepsilon^{-1}  e^{C'(\log \Delta)^{2} } \label{initial q} \rceil   ,  \\
l_k&=&\left\{ \begin{array}{cl}
                \lceil D^{q_kH}  \rceil  \ , &0 \leq k \leq K-1  \\[1mm]
                \lceil \frac{n }{  q_{K}} \rceil , & k = K \\[1mm]
               \end{array} \right.  , \quad 
  q_{k+1} = q_{k}l_k \label{def l q}.
\end{eqnarray}
For $0 \leq k \leq K$, we set
\begin{eqnarray} 
\lambda_k = D^{-C  \Delta q_k}, \quad \xi_k = A^{-\frac{q_{k+1}\theta_0^{k+1}}{2D^{C  \Delta q_k}}},
\end{eqnarray}
and set
\begin{eqnarray*}
Q_0  = \varepsilon^{-1}e^{C'(\log \Delta)^2 }, \quad  Q_1 =  e^{20C' h^{-1} \log D \log A} .
\end{eqnarray*}

We have the following simple lemma.
\begin{lemma} \label{lemmacollectionsofproperties}
\begin{enumerate}
\item $e^{\frac{h}{16}} < A^{\theta_0^{K+1}} \leq e^{\frac{h}{4}}$,
\item For any $C' \gg 1$, for all $0 \leq k \leq K-1$, we have $D^{q_k H} \leq l_{k} \leq Tower(Q_0, Q_1, k+2)$. If $n > Tower( Q_0, Q_1, K+3)$, then $l_{K}  \geq D^{q_K H}$,
\item  
For any $C' \gg 1$, set $\delta_0 = D^{-Tower( Q_0,  Q_1,K+1)}$, we have
\begin{eqnarray*}
&& \xi_i \leq \xi_0, \quad  \delta_0 < \min( \lambda^3_K, \xi_0), \\
 && C' \lambda_i^{-11} \max( \delta_0 , \xi_i) < \varepsilon, \quad \forall 0 \leq i \leq K.
\end{eqnarray*}

\end{enumerate}
\end{lemma}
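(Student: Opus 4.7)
\textit{Part (1).} I would read off the two-sided estimate $\theta_0^{K-1} > 4/\Delta \geq \theta_0^K$ directly from $K = \lceil \log(\Delta/4)/(-\log \theta_0)\rceil$. Multiplying by $\theta_0 \in (1/2,1)$ gives $\theta_0^{K+1} \in (4\theta_0^2/\Delta,\; 4\theta_0/\Delta]$. Since $\Delta = 16 \log A/h$ and $\theta_0^2 > 1/4$, this forces $h/16 < \theta_0^{K+1}\log A \leq h/4$, and exponentiating yields (1).

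\textit{Part (2).} The hinge of the induction is the identity
\[
\frac{H \log D}{\log Q_1} \;=\; \frac{C'\Delta \log D}{20 C' h^{-1}\log A \log D} \;=\; \frac{16}{20} \;=\; \frac{4}{5},
\]
which gives $D^{xH} = Q_1^{4x/5}$ and provides a fixed $20\%$ multiplicative slack in the exponent. Writing $T_m := Tower(Q_0,Q_1,m)$, I would prove by induction on $k$ the joint statement $q_k \leq T_{k+1}$ and $l_k \leq T_{k+2}$. For the base $k=0$, since $q_0 \leq Q_0 + 1$, we have $l_0 \leq 2 Q_1^{4 q_0/5} \leq Q_1^{Q_0} = T_2$ once $Q_0 \log Q_1$ exceeds an absolute constant; the latter is arranged by $C' \gg 1$. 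For the inductive step, given $q_k \leq T_{k+1}$, the same ratio gives $l_k \leq 2 Q_1^{4 q_k/5} \leq T_{k+2}$, and then $\log q_{k+1} \leq \log T_{k+1} + (4/5) T_{k+1}\log Q_1$, which is at most $T_{k+1}\log Q_1 = \log T_{k+2}$ because the residual $(1/5) T_{k+1} \log Q_1$ easily absorbs $\log T_{k+1}$ for $C' \gg 1$. The lower bound $l_k \geq D^{q_k H}$ is immediate from the ceiling. Finally, if $n \geq Tower(Q_0,Q_1,K+3)$, then $l_K \geq n/q_K \geq T_{K+3}/T_{K+1} \geq T_{K+2} \geq D^{q_K H}$.

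\textit{Part (3).} Monotonicity $\xi_i \leq \xi_0$ follows from the ratio
\[
\frac{q_{i+2}\theta_0^{i+2}\lambda_{i+1}}{q_{i+1}\theta_0^{i+1}\lambda_i}
\;=\; \theta_0\, l_{i+1}\, D^{-C\Delta(q_{i+1}-q_i)},
\]
together with $l_{i+1} \geq D^{C'\Delta q_{i+1}}$ and $q_{i+1} - q_i \leq q_{i+1}$: the right-hand side is $\geq \theta_0 D^{(C'-C)\Delta q_{i+1}} \gg 1$ whenever $C' \gg C$. The estimates $\delta_0 < \lambda_K^3$ and $\delta_0 < \xi_0$ are immediate exponent comparisons, since $T_{K+1}$ dominates $3 C\Delta q_K$ and $q_1 \theta_0 \lambda_0 \log A/(2\log D)$ by part (2). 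For the main inequality $C'\lambda_i^{-11}\max(\delta_0,\xi_i) < \varepsilon$ I would split on which factor attains the maximum. When $\delta_0$ wins, $\lambda_i^{-11}\delta_0 = D^{11 C\Delta q_i - T_{K+1}}$, and the refined bound $q_K \leq 2 T_K Q_1^{4 T_K/5} \ll T_{K+1}$ (a byproduct of the proof of (2)) forces the exponent below $-T_{K+1}/2$. When $\xi_i$ wins, the identity $q_{i+1}\lambda_i = q_i l_i D^{-C\Delta q_i} \geq q_i D^{(C'-C)\Delta q_i}$ shows $(q_{i+1}\theta_0^{i+1}\lambda_i/2)\log A$ is exponentially large in $q_i$, dominating the polynomial $11 C\Delta q_i \log D$, so $\lambda_i^{-11}\xi_i$ is again super-exponentially small.

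\textit{Main obstacle.} The nontrivial technical point is to push the induction in (2) through with the sharp index $k+2$ rather than $k+3$: the polynomial overheads $\log q_k$ and $\log T_{k+1}$ as well as the ceiling slack $q_0 \leq Q_0 + 1$ must all be repeatedly absorbed into the $(1/5)\,T_{k+1}\log Q_1$ budget provided by the $4/5 < 1$ gap. Choosing $C' \gg 1$ is precisely what makes $\log Q_1$ large enough for this bookkeeping, and the same slack is what drives all the estimates in (3).
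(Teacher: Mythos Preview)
The paper does not give a proof of this lemma; it is stated as ``the following simple lemma'' and left to the reader. Your proposal is a correct fleshing-out of that verification. The crucial observation you isolate---that $H\log D/\log Q_1 = 4/5$, so the ceiling $l_k=\lceil D^{q_kH}\rceil$ sits strictly below $Q_1^{q_k}$ with a fixed multiplicative gap in the exponent---is exactly what makes the tower index come out as $k+2$ rather than something looser, and the rest of the argument is the bookkeeping you describe.

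Two small points worth tightening. First, your induction hypothesis ``$q_k\le T_{k+1}$'' fails literally at $k=0$ since $q_0=\lceil Q_0\rceil$ may exceed $Q_0=T_1$; you already handle this by working directly with $q_0\le Q_0+1$ at the base, so you should phrase the induction as starting from $k=1$ (after establishing $q_1\le T_2$ and $l_0\le T_2$ by hand). Second, your monotonicity argument for $\xi_i\le\xi_0$ and the ``$\xi_i$ wins'' branch of the last inequality both invoke $l_{i+1}\ge D^{C'\Delta q_{i+1}}$; for $i+1=K$ this is only available once $n>Tower(Q_0,Q_1,K+3)$, i.e., it uses the second clause of part~(2). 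Since $\xi_K$ depends on $n$ through $l_K=\lceil n/q_K\rceil$, the statement for $i=K$ tacitly carries this hypothesis (and indeed the lemma is only invoked in the paper under that hypothesis), so you should flag it explicitly.
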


We define for $0 \leq k \leq K$,
\begin{eqnarray}
\cal{G}_k &:=&  \cal{G}(\lambda_k, \xi_k, f), \label{def G k} \\
 \label{def F k}
\cal{F}_k &:=& \cal{F}(A^{q_k\theta_0^{k+1}}, D^{q_k},  \Delta ,  l_k, f^{q_k}).
\end{eqnarray}

The following is a corollary of Proposition \ref{lem hyp pt}.

\begin{cor}\label{corofproplemhyppt}
If $n > Tower(Q_0, Q_1, K+3)$, then for any $0 \leq k \leq K$ we have
\begin{eqnarray*}
x \notin \cal R( \{ y | \norm{Df^{q_{k}}(y)} > A^{q_k \theta_0^{k}} \}, \eta(\Delta), l_k, f^{q_k}) \bigcup \cal F_k \implies \norm{Df^{q_{k+1}}(x)} \leq A^{q_{k+1}\theta_0^{k+1}}.
\end{eqnarray*}
\end{cor}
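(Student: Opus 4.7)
The approach is to apply Proposition~\ref{lem hyp pt} directly to the iterate $g := f^{q_k}$, with the parameter choices $A_1 := A^{q_k\theta_0^{k+1}}$, $D_1 := D^{q_k}$, and $q := l_k$, while using the same $\Delta$ as in \eqref{def K Delta}. The corollary is a contrapositive statement, so the plan is to assume that $\norm{Df^{q_{k+1}}(x)} > A^{q_{k+1}\theta_0^{k+1}}$ together with $x \notin \cal R(\{y : \norm{Df^{q_k}(y)} > A^{q_k\theta_0^k}\}, \eta, l_k, f^{q_k})$, and then derive $x \in \cal F_k$.

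The first task is to check the four hypotheses of Proposition~\ref{lem hyp pt}. The $C^1$ bound $\norm{Dg} \leq A^{q_k}$ must be at most $A_1^\Delta = A^{q_k\theta_0^{k+1}\Delta}$, which reduces to $\theta_0^{k+1}\Delta \geq 1$; in the worst case $k = K$, Lemma~\ref{lemmacollectionsofproperties}(1) gives $A^{\theta_0^{K+1}} > e^{h/16}$, hence $\theta_0^{K+1}\Delta > 1$. The $C^2$ bound $\norm{D^2 g} \leq D^{q_k} = D_1$ is the convention fixed after \eqref{cond 1}. The third hypothesis matches the non-recurrence assumption of the corollary, because $A_1^{\theta_0^{-1}} = A^{q_k\theta_0^{k}}$. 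The fourth, $\norm{Dg^q(x)} > A_1^q$, is exactly the negation of the target inequality, since $A_1^{l_k} = A^{q_{k+1}\theta_0^{k+1}}$.

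Next I would verify the remaining parameter constraints $A_1 \geq C$, $D_1 \geq A_1$, and $q \geq D_1^{C\Delta}$. The first is immediate from $q_0 \gg 1$; the second follows from $D > A$ combined with $\theta_0^{k+1} < 1$. The third is the only place where the hypothesis on $n$ enters: $D_1^{C\Delta} = D^{Cq_k\Delta}$ is dominated by $l_k \geq D^{q_k H} = D^{q_k C'\Delta}$, provided $C'$ is chosen at least as large as the absolute constant $C$ of Proposition~\ref{lem hyp pt}. For $0 \leq k \leq K-1$, this is the definition of $l_k$; for $k = K$, it is exactly the second assertion of Lemma~\ref{lemmacollectionsofproperties}(2), which requires $n > Tower(Q_0, Q_1, K+3)$.

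Proposition~\ref{lem hyp pt} then yields $x \in \cal F(A_1, D_1, \Delta, l_k, g)$, and it remains to identify this set with $\cal F_k$. Substituting the chosen parameters into the definition, one finds $D_1^{-C\Delta} = D^{-Cq_k\Delta} = \lambda_k$ and $A_1^{-q/(2D_1^{C\Delta})} = A^{-\theta_0^{k+1}q_{k+1}/(2D^{C\Delta q_k})} = \xi_k$, so the conclusion coincides with $\cal F_k$ as defined in \eqref{def F k}. The main obstacle here is purely notational: no new idea is needed beyond Proposition~\ref{lem hyp pt}, but one must patiently match every exponent in the present scaling with its counterpart in the general closing lemma, which is precisely why the parameters $l_k, q_k, \lambda_k, \xi_k$ of Section~\ref{Estimates along a tower exponential sequence} were set up the way they were.
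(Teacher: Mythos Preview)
Your proposal is correct and follows essentially the same route as the paper: both argue by contraposition, apply Proposition~\ref{lem hyp pt} to $g=f^{q_k}$ with the parameter substitution $(A_1,D_1,\Delta,q)=(A^{q_k\theta_0^{k+1}},D^{q_k},\tfrac{16\log A}{h},l_k)$, and invoke Lemma~\ref{lemmacollectionsofproperties} to secure the size constraint on $l_k$. You are in fact slightly more explicit than the paper in verifying the side conditions $A_1\geq C$, $D_1\geq A_1$ and in unpacking the identification $\cal F(A_1,D_1,\Delta,l_k,f^{q_k})=\cal F_k$ via $\lambda_k,\xi_k$.
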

\begin{proof}
By Lemma \ref{lemmacollectionsofproperties}(2), if $n > Tower(Q_0, Q_1, K+3)$ then for any $0 \leq k \leq K$, we have $l_K \geq D^{q_k H}$.
By our choice of $A, D$, we have 
\begin{eqnarray*}
\norm{Df^{q_k}} \leq A^{q_k}, \quad \norm{D^2f^{q_{k}}} < D^{q_k}, \quad \forall 0 \leq k \leq K.
\end{eqnarray*}
We take any $0 \leq k \leq K$, and an arbitrary point $x \in X$ such that $\norm{Df^{q_{k+1}}(x)} > A^{q_{k+1}\theta_0^{k+1}}$. It suffices to show that $x \in \cal R( \{ y | \norm{Df^{q_{k}}(y)} > A^{q_k \theta_0^{k}} \}, \eta(\Delta), l_k, f^{q_k}) \bigcup \cal F_k $.  By Lemma \ref{lemmacollectionsofproperties}(1) we have $\norm{Df^{q_{k}}(x)} \leq A^{q_{k}} \leq (A^{q_k \theta_0^{k+1}})^{\frac{16 \log A}{h}}$. If $x \in \cal R( \{ y | \norm{Df^{q_{k}}(y)} > A^{q_k \theta_0^{k}} \}, \eta(\Delta), l_k, f^{q_k})$, we are done. Otherwise, we can verify conditions (1)-(4) in Proposition \ref{lem hyp pt} for $(f^{q_k}, A^{q_k \theta_0^{k+1}}, D^{q_k},  \frac{16 \log A}{h}, l_{k})$ in place of $(g, A_1, D_1, \Delta, q)$. We can apply Proposition \ref{lem hyp pt} for map $g = f^{q_k}$ to show that $x \in \cal F_k$. This completes the proof.
\end{proof}

The following is a straightforward consequence of Proposition \ref{lem alt}.
\begin{cor} \label{cor big G implies positive entropy}
For all  $C' \gg 1$ the following is true. If we have at least one of the following :

(1) there exists $0 \leq i \leq K$ such that  $|\cal G_i | \geq  \frac{\eta^{K-i}\varepsilon}{(K+1)l_i}$,

(2) there exists $0 \leq i \leq K-1$ such that $m( B ( \cal G_i, D^{-Tower(Q_0, Q_1,K+3)} )) > \varepsilon $,

then $f$ has a heteroclinic intersection, in which case $f$ has positive topological entropy.

\end{cor}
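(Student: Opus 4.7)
In both cases the strategy is to extract from the measure hypothesis on $\cal G_i$ a lower bound on the cardinality of $\mathcal H(f,\lambda_i)$, and then apply Proposition \ref{lem alt} at $\lambda=\lambda_i$ to produce a heteroclinic intersection. The basic geometric input is a tube-volume estimate: for each $y\in\mathcal H(f,\lambda_i)$, the unstable piece $\cWu_{\lambda_i^3}(y)$ is the exponential image of a $\tfrac{1}{100}$-Lipschitz graph over a disk of radius $\lambda_i^3$ in $E^u(y)$, hence a smooth arc of length comparable to $\lambda_i^3$. Covering $X$ by the charts used in the proof of Proposition \ref{lem alt}, any $r$-tubular neighborhood of such an arc has $m$-measure $\leq c\lambda_i^3 r$, provided $r$ is small enough for the chart to stay injective on the tube; Lemma \ref{lemmacollectionsofproperties}(3) ensures this for $r=\xi_i+\delta_0$. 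A union bound over $y\in\mathcal H(f,\lambda_i)$ then yields
\begin{align*}
m(\cal G_i)\;&\leq\;c\,|\mathcal H(f,\lambda_i)|\,\lambda_i^3\,\xi_i, \\
m(B(\cal G_i,\delta_0))\;&\leq\;c\,|\mathcal H(f,\lambda_i)|\,\lambda_i^3\max(\xi_i,\delta_0).
\end{align*}

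Case (2) is then immediate: solving the second inequality for the cardinality and combining with Lemma \ref{lemmacollectionsofproperties}(3), which says $C'\lambda_i^{-11}\max(\xi_i,\delta_0)<\varepsilon$, yields $|\mathcal H(f,\lambda_i)|\geq c^{-1}C'\lambda_i^{-14}\gg\lambda_i^{-14}$ for $C'\gg 1$, which is the hypothesis of Proposition \ref{lem alt}. For case (1), solving the first inequality gives $|\mathcal H(f,\lambda_i)|\geq \eta^{K-i}\varepsilon/(c(K+1)l_i\lambda_i^3\xi_i)$, and it suffices to verify
$$\eta^{K-i}\varepsilon\;\gg\;(K+1)\,l_i\,\lambda_i^{-11}\,\xi_i.$$
The factors $\lambda_i^{-11}=D^{11C\Delta q_i}$, $l_i\leq 2D^{C'\Delta q_i}$ and $1/\eta^{K-i}\leq e^{O((\log\Delta)^2)}$ (from \eqref{def eta L}, \eqref{def K Delta}) are at most simply exponential in $q_i$ or $(\log\Delta)^2$, whereas $\xi_i=A^{-q_{i+1}\theta_0^{i+1}/(2D^{C\Delta q_i})}$ is doubly exponentially small in the ratio $q_{i+1}/D^{C\Delta q_i}=q_i l_i/D^{C\Delta q_i}\geq q_i D^{(C'-C)\Delta q_i}$. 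Using also $\theta_0^{K+1}\log A\geq h/16$ from Lemma \ref{lemmacollectionsofproperties}(1) to keep $\theta_0^{i+1}\log A$ away from $0$ for every $i\leq K$, a direct comparison of logarithms shows the required inequality once $C'$ exceeds an explicit multiple of the absolute constant $C$ of Proposition \ref{lem hyp pt}.

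The only quantitative point requiring care is the logarithm comparison in case (1): one must track how enlarging $C'$ in the recipe $H=C'\Delta$ propagates simultaneously into $l_i$, $q_{i+1}$ and finally into the exponent of $\xi_i$. The crucial gain is that enlarging $C'$ multiplies $\log\xi_i^{-1}$ by a factor $D^{(C'-C)\Delta q_i}$, which is precisely what absorbs all the competing polynomial-in-$q_i$ factors $l_i$, $\lambda_i^{-11}$, $1/\eta^{K-i}$, $1/\varepsilon$. Once the estimate is in hand, Proposition \ref{lem alt} produces a heteroclinic intersection in both cases, and the standard fact that heteroclinic intersections force positive topological entropy for $C^1$ surface diffeomorphisms (already invoked at the end of Proposition \ref{lem alt}) concludes the proof.
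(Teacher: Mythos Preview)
Your proposal is correct and follows essentially the same route as the paper's own proof: both arguments derive a cardinality lower bound on $\mathcal H(f,\lambda_i)$ from the tube-volume estimate $m(B(\cWu_{\lambda_i^3}(y),r))\ll \lambda_i^3 r$, then feed that bound into Proposition~\ref{lem alt}; in case~(2) both invoke Lemma~\ref{lemmacollectionsofproperties}(3) directly, and in case~(1) both verify the inequality $\eta^{K-i}\varepsilon\gg (K+1)l_i\lambda_i^{-11}\xi_i$ by exploiting that $\xi_i$ is doubly exponentially small in $q_i$ while the remaining factors are only simply exponential. The paper packages the last step via the elementary bound $x^{-1}>e^{-x}$, whereas you argue by direct comparison of logarithms, but the substance is identical.
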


We include the proof of Corollary \ref{cor big G implies positive entropy} in Appendix \ref{AppB}.

\begin{rema} \label{remarc'}
Given $A,D,h$ as in Theorem \ref{mainA} or \ref{mainB}, we will choose $C'$ to be sufficiently large so that the conclusions of both Lemma \ref{lemmacollectionsofproperties} and Corollary \ref{cor big G implies positive entropy} hold.
\end{rema}

\section{An iterative decomposition}

Now we say a few words about the general strategy behind the proof of Theorem \ref{mainA} and Theorem \ref{mainB}. 
We will inductively define a sequence of decompositions of the surface $X$, denoted by $X = M_{i} \sqcup E_i$.
To start the induction, we define $M_0 = X$ and $E_0 = \emptyset$. Assume that for $k \geq 0$, we have defined $M_k, E_k$ satisfying the following condition: 
\smallskip

\centerline{\begin{textit}
{For each $x \in M_k$, we have $\norm{Df^{q_k}(x)} \leq A^{q_k \theta_0^{k}}$. }
\end{textit} }
\smallskip

Then $E_{k+1}$ is defined as the set of the points that up till some finite time scale, either run into $E_{k}$ with frequency $ \geq \eta$, or is shadowed by hyperbolic orbits ( of course the first case does not happen if $E_k$ is empty ). We will use Proposition \ref{lem hyp pt} to show that the complement of $E_{k+1}$, defined as $M_{k+1}$, again satisfies the induction hypothesis. We then argue that after roughly $K = O(\log (\frac{\log A}{h}))$ steps, $E_{K+1}$ has to be large.  This will show that at some previous time scale, there are enough different hyperbolic hyperbolic points to create a homoclinic intersection.

The formal construction is the following.
 For all $0 \leq k \leq K+1$, we define $M_{k}, E_{k}$ through the following inductive formula. Let
\begin{eqnarray}
E_0 = \emptyset, \quad M_0 = X  \label{def init}
\end{eqnarray}
and for all $0 \leq k \leq K$, we define
\begin{eqnarray} \label{defofek+1}
E_{k+1} &=& \mathcal{R}(E_{k}, \eta, l_k, f^{q_k})  \label{def E k+1} \bigcup  \cal{F}_k,\\
M_{k+1} &=& X \setminus E_{k+1} \label{def M k+1}
\end{eqnarray}


\begin{lemma} \label{lem small der}
If $n > Tower(Q_0, Q_1, K+3)$, then for any $0 \leq k \leq K+1$ we have 
  \begin{align*} x \in M_{k}&\implies \norm{Df^{q_{k}}(x)} \leq A^{q_{k}\theta_0^{k}}.
  \end{align*}
\end{lemma}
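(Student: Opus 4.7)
The plan is to proceed by induction on $k$, exploiting the recursive definition \eqref{defofek+1} of $E_{k+1}$ together with Corollary \ref{corofproplemhyppt}.

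For the base case $k=0$, we have $M_0 = X$ and by \eqref{norms} (or rather by \eqref{cond 1} and the standing assumption that $\|Df\| \leq A$), $\|Df^{q_0}(x)\| \leq A^{q_0} = A^{q_0 \theta_0^0}$ for every $x \in X$, which is the desired bound.

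For the inductive step, suppose the conclusion holds at level $k$, and let $x \in M_{k+1}$. By \eqref{def E k+1}--\eqref{def M k+1}, this means simultaneously that $x \notin \cal{F}_k$ and $x \notin \mathcal{R}(E_k, \eta, l_k, f^{q_k})$. The second condition says that the set of $0 \leq j \leq l_k - 1$ with $f^{j q_k}(x) \in E_k$ has cardinality at most $\eta l_k$; equivalently, for at least a $(1-\eta)$-proportion of such $j$, the iterate $f^{j q_k}(x)$ lies in $M_k$. By the inductive hypothesis applied at each of these iterates, $\|Df^{q_k}(f^{j q_k}(x))\| \leq A^{q_k \theta_0^k}$ for these $j$. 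Therefore
\begin{equation*}
x \notin \mathcal{R}\bigl(\{y : \|Df^{q_k}(y)\| > A^{q_k \theta_0^k}\},\, \eta,\, l_k,\, f^{q_k}\bigr).
\end{equation*}
Combining with $x \notin \cal{F}_k$, the contrapositive of Corollary \ref{corofproplemhyppt} (applied at level $k$, which requires the assumption $n > Tower(Q_0, Q_1, K+3)$ so that $l_k \geq D^{q_k H}$ as in Lemma \ref{lemmacollectionsofproperties}(2)) yields
\begin{equation*}
\|Df^{q_{k+1}}(x)\| \leq A^{q_{k+1} \theta_0^{k+1}},
\end{equation*}
which closes the induction.

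There is essentially no genuine obstacle here: the lemma is a clean bookkeeping consequence of how $E_{k+1}$ was engineered and of Corollary \ref{corofproplemhyppt}. The only point that merits care is the logical dualization of the recurrence condition, namely that failing to be $(\eta, l_k, f^{q_k})$-recurrent to $E_k$ implies being $(1-\eta, l_k, f^{q_k})$-visiting of $M_k$, which in turn (via the inductive hypothesis) implies failing to be $(\eta, l_k, f^{q_k})$-recurrent to the large-derivative set appearing in Corollary \ref{corofproplemhyppt}. Once this is noted, the result follows directly from a two-line application of that corollary.
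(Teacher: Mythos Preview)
Your proof is correct and follows essentially the same inductive approach as the paper: the base case uses $\|Df\|\leq A$ and sub-multiplicativity, and the inductive step uses the inclusion $\{y:\|Df^{q_k}(y)\|>A^{q_k\theta_0^k}\}\subset E_k$ (from the inductive hypothesis) to pass from non-recurrence in $E_k$ to non-recurrence in the large-derivative set, then invokes Corollary~\ref{corofproplemhyppt}. The paper's version is merely more terse, stating the set inclusion directly rather than unwinding the recurrence condition as you do.
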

\begin{proof}
This is clear when $k=0$ by $\norm{Df} \leq A$ and sub-multiplicativity. Assume that the lemma is valid for some integer $k \in \{0\cdots, K\}$, then $\{ x | \norm{Df^{q_{k}}(x)} > A^{q_k \theta_0^{k}}\} \subset E_{k}$ ( we consider the inclusion valid if both sides are empty). By Corollary \ref{corofproplemhyppt} and \eqref{def E k+1}, we see that any $x \in X$ such that $\norm{Df^{q_{k+1}}(x)} > A^{q_{k+1}\theta_0^{k+1}}$ is contained in $E_{k+1}$. This completes the induction, thus finishes the proof.\end{proof}

We will give the proof of Theorem \ref{mainA} and \ref{mainB} in the next two subsections.
 In the following, we let $C, \theta_0$ be defined in Proposition \ref{lem hyp pt}, let $A,D,h > 0$ be given by Theorem \ref{mainA} or \ref{mainB}, and let $C'$ be sufficiently large depending only on $X$, satisfying Remark \ref{remarc'}.
 
\subsection{Proof of Theorem \ref{mainA}}




 
\begin{prop} \label{propK} Let $C_0$ in Theorem \ref{mainA} be sufficiently large. Then under the conditions of Theorem \ref{mainA},  we have
\begin{eqnarray*}
|E_{K+1}| \geq \varepsilon.
\end{eqnarray*}
\end{prop}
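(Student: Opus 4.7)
The plan is to argue by contradiction. Assume $|E_{K+1}|<\varepsilon$, so that $|M_{K+1}|>1-\varepsilon$, and derive an upper bound on $N_f(n,\delta,\varepsilon)$ incompatible with the hypothesis $N_f(n,\delta,\varepsilon)>e^{nh}$ of Theorem \ref{mainA}.

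First I upgrade Lemma \ref{lem small der} from a derivative bound at the single time $q_{K+1}$ to a bound uniform over $i\in[0,q_{K+1}]$, using the very definition of $M_{K+1}$: for any $x\in M_{K+1}$ at most $\eta l_K$ of the $l_K$ scale-$q_K$ blocks in its orbit start in $E_K$, and on the remaining blocks Lemma \ref{lem small der} gives $\|Df^{q_K}(\cdot)\|\leq A^{q_K\theta_0^K}$. Splitting any partial product $\|Df^{q_K j}(x)\|\leq\prod_{j'<j}\|Df^{q_K}(f^{q_K j'}x)\|$ according to whether $f^{q_K j'}(x)\in M_K$ and using $\|Df^r\|\leq A^r$ for the residual time $r<q_K$, one obtains
\[
\max_{0\leq i\leq q_{K+1}}\|Df^i(x)\| \leq A^{q_K}\cdot e^{q_{K+1}\log A[\theta_0^K+\eta(1-\theta_0^K)]}\leq e^{\alpha q_{K+1}h},
\]
for some $\alpha<1/4$ whose margin $\tfrac14-\alpha$ is a fixed positive constant: the ceiling in the definition of $K$ forces $\theta_0^K\log A<h/4$ strictly, and Lemma \ref{lemmacollectionsofproperties}(2) makes the prefactor $A^{q_K}$ negligible since $l_K$ is enormous.

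Next, setting $M:=\max_{0\leq i\leq n}\|Df^i(x)\|\leq e^{\alpha q_{K+1}h}$, I convert this into a lower bound on Bowen ball measure. Iterating the Taylor expansion $\delta_{i+1}=Df(f^ix)\delta_i+R_i$ with $|R_i|\leq(D/2)|\delta_i|^2$ along the orbit, and using the surface area-preserving identity $\|Df^k(y)^{-1}\|=\|Df^k(y)\|$ to bound the backward transport factor via $\|Df^{n-1-j}(f^{j+1}x)\|\leq\|Df^n(x)\|\cdot\|Df^{j+1}(x)\|\leq M^2$, one shows that $B_f(x,n,\delta)$ contains a Euclidean ball of radius $\gtrsim\sqrt{\delta}/(M^2\sqrt{nD})$, hence $m(B_f(x,n,\delta))\geq c\delta/(nD M^4)$. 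A maximal $(n,\delta/2)$-separated set $S\subset M_{K+1}$ then has its $(n,\delta)$-Bowen balls covering $M_{K+1}$ and its $(n,\delta/4)$-Bowen balls pairwise disjoint in $X$; combined with $|M_{K+1}|>1-\varepsilon$ and the Bowen ball lower bound, this yields
\[
N_f(n,\delta,\varepsilon)\leq|S|\leq\frac{nD}{c\delta}\cdot M^4\leq\frac{nD}{c\delta}\cdot e^{4\alpha q_{K+1}h}.
\]
Since $q_{K+1}\leq n+q_K$ with $q_K$ tiny compared with $n\geq Tower(P_0,P_1,K_0)$, the ratio $q_{K+1}/n$ is $1+o(1)$; with $4\alpha<1$ by a fixed margin and with $n\geq P_0$ absorbing the polynomial prefactor $nD/(c\delta)$, we get $N_f(n,\delta,\varepsilon)<e^{nh}$, contradicting the hypothesis.

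The principal obstacle is the Bowen ball lower bound: the naive iterated chain-rule estimate $\|D^2f^n\|\lesssim nDM^4$ costs a quartic factor of $M$, which would only match $e^{nh}$ if $M$ were of order $e^{q_{K+1}h/4}$. It is therefore essential that the first step produce a value of $M$ strictly smaller than $e^{q_{K+1}h/4}$ by a fixed multiplicative margin; this is the reason for the ceiling in \eqref{def K Delta} and for the extra constant $C_0$ in the definition of $K_0$, and the combined slack is ultimately what forces the tower-exponential threshold on $n$ in the statement of Theorem \ref{mainA}.
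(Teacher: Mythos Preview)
Your overall plan---cover $M_{K+1}$ by few Bowen balls and contradict $N_f(n,\delta,\varepsilon)>e^{nh}$---matches the paper's. However, the execution of the Bowen-ball lower bound has a genuine gap.

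First, the claim that $\alpha<\tfrac14$ with a \emph{fixed} margin is not justified. From $K=\lceil\log(\Delta/4)/(-\log\theta_0)\rceil$ one obtains only $\theta_0^K\log A\le h/4$, with equality possible and no uniform gap; adding the $\eta$-contribution gives $\alpha\le\tfrac14+O(C^{-1})$. So $4\alpha<1$ cannot be guaranteed.

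Second, and more seriously, the radius $\sqrt{\delta}/(M^2\sqrt{nD})$ does not follow from the iterated Taylor expansion. The variation-of-constants identity $\delta_i=Df^i(x)\delta_0+\sum_{j<i}Df^{i-1-j}(f^{j+1}x)R_j$ with $|R_j|\le(D/2)|\delta_j|^2$ and the $M^2$ transport bound are correct, but closing the bootstrap requires $|\delta_j|\le\rho$ with $(D/2)M^2n\rho<1$, forcing $\rho<2/(DM^2n)$ and hence $|\delta_0|\lesssim 1/(DM^3n)$. The resulting count is $\sim D^2M^6n^2$, which needs $\alpha<\tfrac16$---even further out of reach. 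Your claimed radius would follow from the pointwise bound $\|D^2f^i(x)\|\le nDM^4$, but using this in a Taylor remainder requires the bound on a \emph{neighborhood} of $x$, where the intermediate derivatives $\|Df^j(z)\|$ are uncontrolled.

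The paper's Lemma~\ref{lem big ball} sidesteps both issues by tracking the ball radius directly in blocks of length $q_K$ rather than first bounding $M$. Writing $a_j=\log\|Df^{q_K}(f^{jq_K}y)\|$ and using $\|D^2f^{q_K}\|\le D^{q_K}$, one gets $f^{q_K}(B_j)\subset B_{j+1}$ with the multiplicative update $\delta_{j+1}=e^{a_j+hq_K/24}\delta_j$; the $C^2$ correction stays negligible because the ball is kept at size $\le e^{-nh/20}\delta$ throughout (this uses $D^{q_K}\le e^{nh/20}$). This yields $B(y,e^{-2nh/5}\delta)\subset B_f(y,n,\delta)$, hence a count $\lesssim e^{4nh/5}\delta^{-2}$. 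The constraint needed is merely that the total block growth $\sum a_j + l_Khq_K/24 \lesssim nh/3$ stay below $2nh/5$, which follows from $\theta_0^K\log A\le h/4$ and $\eta\log A\le h/24$ with room to spare---far weaker than $\alpha<\tfrac14$.
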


\begin{proof}
We first show the following lemma.

\begin{lemma} \label{lem big ball}
Let $C_0$ in Theorem \ref{mainA} be sufficiently large, and let $n$ be given as in Theorem \ref{mainA}. Then for each $y \in M_{K+1}$, we have 
\begin{eqnarray*}
B(y, e^{-2nh/5} \delta ) \subset B_f(y, n, \delta).
\end{eqnarray*}
\end{lemma}

\begin{proof}

It is clear from \eqref{def l q} that 
\begin{eqnarray*}
l_{K} \in (\frac{n}{q_{K}}, \frac{21}{20} \frac{n}{q_{K}} ).
\end{eqnarray*}

Let $y \in M_{K+1}$. For each $0 \leq i \leq l_K-1$, we denote by
\begin{eqnarray*}
a_i = \log \norm{Df^{q_{K}}(f^{iq_{K}}(y))} , \quad
\delta_{i} = e^{-2nh/5 + iq_{K}h/24+\sum_{j=0}^{i-1} a_i} \delta, \quad
B_i = B(f^{iq_{K}}(y), \delta_{i}).
\end{eqnarray*}
By letting $C_0$ in Theorem \ref{mainA} be sufficiently large, we can ensure that $n > Tower(P_0, P_1, K_0) > Tower(Q_0, Q_1, K+3)$.
Then by Lemma \ref{lemmacollectionsofproperties}(1) and Lemma \ref{lem small der}, we have for each $z \in M_{K}$, $ \log \norm{Df^{q_{K}}(z)} \leq q_{K} \theta_0^{K}\log A \leq \frac{hq_{K}}{4}$.
Then by $y \in M_{K+1}$, \eqref{defofek+1} and Lemma \ref{lem small der}, we have  $
y \notin \mathcal{R}(\{z | \log \norm{Df^{q_K}(z)} > \frac{hq_K}{4}\}, \eta, l_k, f^{q_k})$,
thus
\begin{eqnarray*}
|\{ 0 \leq i \leq l_K-1 |  a_i > \frac{hq_{K}}{4} \}| \leq \eta l_K.
\end{eqnarray*}
Since $0 \leq a_i \leq q_{K}\log A$ for any $0 \leq i \leq l_K-1$, we have
\begin{eqnarray*}
\sum_{j=0}^{i-1} a_j \leq \sum_{j=0}^{l_K-1} a_j \leq \eta l_K q_K \log A +  \frac{l_Kq_{K}h}{4} \leq  \frac{7l_Kq_{K}h}{24}, \quad \forall 0 \leq i \leq l_K-1.
\end{eqnarray*}
The last inequality follows from $\eta \leq \frac{h}{24 \log A}$ which is a consequence of \eqref{def K Delta}, \eqref{def eta L} and $h \in (0, \log A]$.
Then for any $0 \leq i \leq l_K-1$, we have
\begin{eqnarray} \label{term 100}
\delta_i \leq e^{-2nh/5 + l_Kq_{K}h/3} \delta \leq e^{-\frac{1}{20}nh}\delta.
\end{eqnarray}
We claim that for any integer $0 \leq i \leq l_K-1$, 
\begin{eqnarray} \label{lab pass}
f^{iq_{K}}(B_0) \subset B_i.
\end{eqnarray}

We first show that the above claim  concludes the proof of our lemma. Indeed, for any $0 \leq l \leq n$, there exist $0 \leq i \leq l_K-1, 0 \leq j \leq q_{K}-1$ such that $l = iq_{K} + j$. Then we have
\begin{eqnarray*}
f^{l}(B_0)= f^{j}(f^{iq_{K}}(B_0)) \subset f^{j}(B_{i}) \subset B(f^{l}(y), \delta)
\end{eqnarray*}
The last inclusion follows from $A^{q_{K}}\delta_i \leq A^{q_{K}}e^{-nh/20} \delta \leq \delta$, by $\norm{Df^j} \leq A^{q_{K}}$,  \eqref{term 100} and $\frac{n}{q_{K}} \geq \frac{20\log D}{h}$.

Now we obviously have \eqref{lab pass} for $i=0$. Assume that we have \eqref{lab pass} for some $0 \leq i \leq l_K-1$, we will show that we have \eqref{lab pass} for $i+1$. It suffices to show that $f^{q_{K}}(B_i) \subset B_{i+1}$. Using the $C^{2}$ bound $\norm{D^2f^{q_{K}}} \leq D^{q_{K}}$ and $\frac{n}{q_{K}} \geq \frac{20\log D}{h}$, we see that for any $z \in B_i$, 
\begin{eqnarray*}
\norm{Df^{q_{K}}(z)} &\leq& e^{a_i} + \delta_i D^{q_{K}}\\
&\leq& e^{a_i} + D^{q_{K}} e^{-nh/20} \delta 
\leq e^{a_i + hq_{K}/24}.
\end{eqnarray*}
Since $\delta_{i+1} = e^{a_i + hq_{K}/24} \delta_{i}$, we obtain $f^{q_{K+1}}(B_i) \subset B_{i+1}$. This proves \eqref{lab pass} and concludes the proof of Lemma \ref{lem big ball}. \end{proof}

To proceed with the proof of Proposition \ref{propK}, observe that by Lemma \ref{lem big ball},  $M_{K+1} = X\setminus E_{K+1}$ can be covered by $ce^{4nh/5}\delta^{-2}$ many Bowen's $(n,\delta)-$balls. By \eqref{defQ}, $n > P_0$ and by letting $C_0$ be large, we have $c\delta^{-2} < e^{P_0h/5} < e^{nh/5}$.
This implies that $|E_{K+1}| \geq \varepsilon$.  
\end{proof}

\begin{proof}[Proof of Theorem \ref{mainA}] 
Since $f$ is area preserving, by Markov's inequality we have 
\begin{eqnarray*}
|\mathcal{R}(E_{k}, \eta, l_k, f^{q_k})| \leq \eta^{-1}|E_{k}|.
\end{eqnarray*}
Again by the fact that $f$ is area preserving, we obtain the following inequality by \eqref{def E k+1},  \eqref{def F k}
\begin{eqnarray} \label{ineq E} 
|E_{k+1}| \leq\eta^{-1}|E_{k}|  + |\cal{F}_k| 
 \leq \eta^{-1}|E_{k}|  + l_k | \cal{G}_k|.
\end{eqnarray}

By \eqref{ineq E} and \eqref{def init}, we have
\begin{eqnarray*}
|E_{K+1}| \leq \sum_{i=0}^{K} \eta^{i-K} l_i |\cal G_i|.
\end{eqnarray*}
Thus $|E_{K+1}| \geq \varepsilon$ implies that $|\cal G_i| \geq \eta^{K-i} \frac{\varepsilon}{(K+1)l_{i}}$ for some $0 \leq i \leq K$, which by Corollary \ref{cor big G implies positive entropy} (1)  implies that $f$ has positive entropy.
\end{proof}

\subsection{Proof of Theorem \ref{mainB}}

The proof of Theorem \ref{mainB} is parallel to that of Theorem \ref{mainA}.
The following proposition is an analogue of Proposition \ref{propK}.
\begin{prop} \label{lem tilde e big}
Let $C_0$ in Theorem \ref{mainB} be sufficiently large, and let $n$ be as in  Theorem \ref{mainB}. Then under the condition of Theorem \ref{mainB}, we have
\begin{eqnarray*}
\mu_{x,n}(E_K) \geq \frac{h}{2 \log A }.
\end{eqnarray*}
\end{prop}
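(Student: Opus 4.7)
The proof of Theorem \ref{mainA} controlled $|E_{K+1}|$ by combining area preservation with Markov's inequality. For Theorem \ref{mainB} there is no such shortcut for the empirical measure $\mu_{x,n}$, but the hypothesis $\|Df^n(x)\| > e^{nh}$ can be converted into visits of the orbit to $E_K$ through a greedy chaining of the orbit segment $x, f(x), \ldots, f^{n-1}(x)$. The plan relies on two ingredients: the one-step bound $\|Df\| \leq A$, and, from Lemma \ref{lem small der}, the $q_K$-step bound $\|Df^{q_K}(y)\| \leq A^{q_K \theta_0^K}$ for $y \in M_K$. By Lemma \ref{lemmacollectionsofproperties}(1) combined with $\theta_0 > 1/2$, one has $\theta_0^K \log A \leq h/(4\theta_0) < h/2$ strictly, with a margin depending only on the absolute constant $\theta_0$.

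Concretely, I would inductively define indices $0 = s_0 < s_1 < \cdots < s_I = n$ as follows: if $f^{s_i}(x) \in M_K$ and $s_i + q_K \leq n$, take a big step $s_{i+1} = s_i + q_K$; otherwise take a small step $s_{i+1} = s_i + 1$. Let $|I_M|, |I_E|$ count the big and small steps respectively, so $|I_M| q_K + |I_E| = n$. A small step at index $i$ is either \emph{genuine}, meaning $f^{s_i}(x) \in E_K$, or \emph{forced}, meaning $f^{s_i}(x) \in M_K$ but $s_i > n - q_K$; the number of forced small steps is at most $q_K$. Chaining the derivatives yields
\[
nh \;<\; \log \|Df^n(x)\| \;\leq\; |I_M|\, q_K \theta_0^K \log A + |I_E| \log A,
\]
and substituting $|I_M| q_K = n - |I_E|$ gives
\[
\frac{|I_E|}{n} \;\geq\; \frac{h - \theta_0^K \log A}{\log A\,(1 - \theta_0^K)}.
\]
The strict inequality $\theta_0^K \log A < h/2$, with absolute-constant margin, makes the right side exceed $h/(2\log A)$ by an absolute-constant multiple of $h/\log A$.

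Since $\mu_{x,n}(E_K)$ counts only the genuine small steps, one loses at most $q_K/n$ in passing from $|I_E|/n$ to $\mu_{x,n}(E_K)$; the constant-multiple-of-$h/\log A$ slack just obtained absorbs this loss as soon as $n \gg q_K \log A / h$. The main, and essentially only, obstacle is therefore bookkeeping: one must verify that the lower bound $n \geq Tower(P_0,P_1,K_0)$ in Theorem \ref{mainB}, with $K_0 = \lceil C_0 \log(\log A/h) + C_0 \rceil$ and $C_0$ chosen sufficiently large, dominates $q_K \log A / h$. This follows from the tower estimate $q_K \leq Tower(Q_0,Q_1,K+1)$ of Lemma \ref{lemmacollectionsofproperties}(2), since $K_0$ is chosen strictly larger than $K+1$. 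The sparsity hypothesis from Theorem \ref{mainB} plays no role at this stage; it will enter only in the subsequent step, converting a large value of $\mu_{x,n}(E_K)$ into positive topological entropy via Corollary \ref{cor big G implies positive entropy}.
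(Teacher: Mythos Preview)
Your argument is correct and is essentially the paper's own proof: both greedily partition $\{0,\ldots,n-1\}$ into $q_K$-blocks starting at points of $M_K$ together with leftover single steps, bound the $q_K$-block contributions by Lemma~\ref{lem small der} and Lemma~\ref{lemmacollectionsofproperties}(1), and read off $\mu_{x,n}(E_K)\ge h/(2\log A)$ from $\|Df^n(x)\|>e^{nh}$. You are in fact slightly more careful than the paper, which asserts that every index outside the $q_K$-blocks lies in $E_K$ without explicitly treating the at most $q_K$ ``forced'' indices at the tail; your absorption of this $q_K/n$ term via the absolute-constant slack in $\theta_0^K\log A<h/2$ and the tower lower bound on $n$ is exactly the right fix.
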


\begin{proof}
By letting $C_0$ in Theorem \ref{mainA} be sufficiently large, we can ensure that $n > Tower(P_0, P_1, K_0) > Tower(Q_0, Q_1, K+3)$.
Then by Lemma \ref{lem small der}, for each $y \in M_{K}$, we have $\norm{Df^{q_K}(y)} \leq A^{q_K \theta_0^{K}} \leq e^{\frac{q_Kh}{2}}$. 

We take a subset $\{p_1, \cdots, p_{l}\} \subset \{0,\cdots, n-q_K\}$ so that $I_j :=\{p_j,\cdots, p_j + {q_K}-1\}, 1 \leq j \leq l $ are disjoint subsets of $\{0,\cdots, n-1\}$ and $f^{p_j}(x) \in M_{K}$ for all $j$. Moreover, we assume that  for any $k \in \{0,\cdots, n-1\} \setminus \bigcup_{j=1}^l I_j$, we have $f^{k}(x) \in E_K$. The construction of $\{p_i\}_{i=1}^{l}$ is straightforward.

Then by sub-multiplicativity, we have
\begin{eqnarray*}
\log\norm{Df^{n}(x)} &\leq& \sum_{i=1}^{l} \log \norm{Df^{q_K}(f^{p_i}(x))} + (n - l q_K) \log A \\
&\leq& \frac{1}{2} l  h q_K + (n-lq_K) \log A.
\end{eqnarray*}
By the condition in Theorem \ref{mainB}, we have $\log \norm{Df^{n}(x)} > n h$.
Thus $n(\log A - h) > l q_K( \log A - \frac{h}{2})$.
Then we see that $\mu_{x,n}(E_K) \geq \frac{n-lq_K}{n} \geq \frac{h}{2 \log A} $. 
\end{proof}

\begin{proof}[Proof of Theorem \ref{mainB}]
For any measurable set $B \subset X$, any integers $n,l \geq 1$, any $x \in X$, we have
\begin{eqnarray*}
\mu_{x,n}(f^{-l}(B)) \leq \frac{l}{n} + \mu_{x,n}(B).
\end{eqnarray*}
Then for any $k=0,\cdots, K-1$, by Markov's inequality we have
\begin{eqnarray*}
\mu_{x,n}(\cal R(E_k, \eta, l_k, f^{q_k})) &\leq&(\eta l_k) ^{-1} \int \sum_{i=0}^{l_K-1} 1_{f^{-iq_{K}}(E_k)}) d\mu_{x, n}\\
&\leq& (\eta l_k) ^{-1} \sum_{i=0}^{l_k-1} \mu_{x,n}(f^{-iq_{k}}(E_k)) \\
&\leq& (\eta l_k) ^{-1} \sum_{i=0}^{l_k-1} (\mu_{x,n}(E_k) + \frac{iq_{k}}{n}) \leq \eta^{-1}\mu_{x,n}(E_k) + \frac{q_{k+1}}{2n \eta}.
\end{eqnarray*}
Similarly, we have
\begin{eqnarray*}
\mu_{x,n}(\cal {F}_k) &\leq& \sum_{i=0}^{l_k-1} \mu_{x,n}(g^{-iq_k}(\cal G_k)) \\ 
&\leq& l_k \mu_{x,n}(\cal G_k)  + \frac{l_kq_{k+1}}{n}.
\end{eqnarray*}
Then we have an  inequality analogous to \eqref{ineq E}, as follows,
\begin{eqnarray*} \label{ineq E2}
\mu_{x,n}(E_{k+1}) &\leq&   \mu_{x,n}(\cal R(E_k, \eta, l_k, f^{q_k}))+ \mu_{x,n}(\cal {F}_k)  \\
&\leq& \eta^{-1} \mu_{x,n}(E_k) + \frac{q_{k+1}}{2n\eta} + l_k \mu_{x,n}(\cal G_k)  + \frac{l_kq_{k+1}}{n}. \nonumber
\end{eqnarray*}
By the simple observation that $l_k \geq l_0 \geq \eta^{-1}$ for all $0 \leq k \leq K$, we have
\begin{eqnarray*}
\mu_{x,n}(E_K) \leq \sum_{i=0}^{K-1} \eta^{-K+i+1}( l_i \mu_{x,n}(\cal G_i) + \frac{2l_{i}q_{i+1}}{n}).
\end{eqnarray*}
By \eqref{def l q} and Proposition \ref{lem tilde e big}, we see that there exists $0 \leq i \leq K-1$ such that
\begin{eqnarray*}
\mu_{x,n}(\cal G_i) &\geq& l_i^{-1}(\frac{\eta^{K-i-1}}{K}\frac{h}{2\log A } - \frac{2q_{K}l_{K-1}}{n})\geq l_i^{-2}.
\end{eqnarray*}
The last inequality follows from
\begin{eqnarray*}
\frac{2q_K l_{K-1}}{n} < \frac{2l_{K-1}}{l_K} < q_0^{-1} \leq e^{-C' (\log \Delta)^2} < \frac{\eta^{K}h}{4K \log A}, \\
\frac{\eta^{K}h}{4K \log A} > l_0^{-1} \geq l_i^{-1}, \quad \forall 0 \leq i \leq K-1,
\end{eqnarray*}
by letting $C'$ be larger than some absolute constant.
In particular, by Lemma \ref{lemmacollectionsofproperties}(2), \eqref{def K Delta}, \eqref{initial q}, \eqref{def l q}, and by letting $C_0$ in Theorem \ref{mainB} be sufficiently large, we have  
\begin{eqnarray*} 
&\mu_{x,n}(\cal G_i) >  Tower(Q_0, Q_1, K+1)^{-2} >  Tower(Q_0, Q_1, K+2)^{-1} , \\
&K_0 \geq K +4, \quad P_i > Q_i, i=0,1.
\end{eqnarray*}
By the condition of Theorem \ref{mainB} that $x$ is $(n, Tower(P_0,P_1,K_0-1)^{-1}, D^{-Tower(P_0,P_1,K_0-1)},\varepsilon)$-sparse, we see that 
\begin{eqnarray*}
m(B(\cal G_i, D^{-Tower(Q_0, Q_1,K+3)})) \geq m(B(\cal G_i, D^{-Tower(P_0, P_1,K_0-1)})) > \varepsilon.
\end{eqnarray*}
This concludes the proof by Corollary \ref{cor big G implies positive entropy} (2).

\end{proof}

\appendix

\section{}

In this section we prove the main technical result Proposition \ref{lem hyp pt}.
We start with a slight generalization of Pliss lemma \cite{pliss}.
\begin{lemma}[a variant of Pliss] \label{lem improved pliss}
For any real numbers $N \geq 1$, $1 > \theta_0 > \theta_1 > \theta_2 > 0$, $\eta \in (0,\frac{1}{2} \frac{1-\theta_0}{N-\theta_0}\frac{ \theta_1 -   \theta_2}{N - \theta_2})$, for any integer $n \geq 1$, real number $l > 0$, the following is true.
Given a sequence of $n$ real numbers $a_1,...,a_n$. Assume that 
\begin{enumerate}
\item $a_i \leq Nl$ for all $1\leq i \leq n$,
\item $\sum_{i=1}^{n}a_i >n \theta_1l$,
\item $|\{ 1 \leq i \leq n | a_i > \theta_0 l\}|<  \eta n$.
\end{enumerate}
Then there exist at least $\frac{\theta_1-\theta_2}{1-\theta_2}n$ indexes $i$'s such that $\frac{1}{k}\sum_{j=i}^{i+k-1} a_j > \theta_2 l$ for all $1 \leq k\leq n+1-i$.
\end{lemma}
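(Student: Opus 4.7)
The plan is to reduce to the standard Pliss lemma by truncating the sequence at the level $\theta_0 l$, using hypothesis (3) to bound the loss incurred by truncation. Concretely, I would define
\begin{equation*}
\tilde{a}_i := \min(a_i, \theta_0 l), \qquad 1 \leq i \leq n,
\end{equation*}
so that $\tilde{a}_i \leq \theta_0 l$ and $\tilde{a}_i \leq a_i$ hold automatically. Only indices in the set $S := \{i : a_i > \theta_0 l\}$ contribute to the loss $a_i - \tilde{a}_i$, and each such loss is at most $(N-\theta_0)l$ by hypothesis (1). Since $|S| < \eta n$ by (3), hypothesis (2) gives
\begin{equation*}
\sum_{i=1}^{n} \tilde{a}_i > n\theta_1 l - \eta n (N-\theta_0) l = n\theta_1' l, \qquad \theta_1' := \theta_1 - \eta(N - \theta_0).
\end{equation*}

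The bound on $\eta$ implies $\eta(N-\theta_0) < \tfrac{1}{2}\frac{(1-\theta_0)(\theta_1-\theta_2)}{N-\theta_2} < \tfrac{1}{2}(\theta_1-\theta_2)$, since $1-\theta_0 < N-\theta_2$. Hence $\theta_1' > \tfrac{1}{2}(\theta_1+\theta_2) > \theta_2$, which lets me apply the classical Pliss lemma to $\tilde{a}_i$ with upper bound $\theta_0 l$ and average exceeding $\theta_1' l$. Classical Pliss then yields at least
\begin{equation*}
\frac{\theta_1' - \theta_2}{\theta_0 - \theta_2}\, n
\end{equation*}
indices $i$ for which $\tfrac{1}{k}\sum_{j=i}^{i+k-1} \tilde{a}_j > \theta_2 l$ for every $1 \leq k \leq n+1-i$. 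Because $\tilde{a}_j \leq a_j$ pointwise, the same inequality holds for the original sequence $a_j$.

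It remains to show that $\frac{\theta_1' - \theta_2}{\theta_0 - \theta_2} \geq \frac{\theta_1 - \theta_2}{1 - \theta_2}$. Cross-multiplying (both denominators are positive), this is equivalent to
\begin{equation*}
\eta(N-\theta_0)(1-\theta_2) \leq (\theta_1-\theta_2)(1-\theta_0),
\end{equation*}
which follows immediately from the hypothesis on $\eta$, with a factor of $2$ and a factor of $\frac{1-\theta_2}{N-\theta_2}\leq 1$ to spare. This gives the desired lower bound $\frac{\theta_1-\theta_2}{1-\theta_2}n$ on the number of good indices.

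The only real ``content'' in this argument is the correct choice of truncation level (at $\theta_0 l$ rather than at $l$ or at $\theta_1 l$) together with the verification that the hypothesis on $\eta$ is exactly tuned so that both (i) the truncated average still exceeds $\theta_2$ and (ii) the classical Pliss bound $\frac{\theta_1'-\theta_2}{\theta_0-\theta_2}$ applied to the truncated sequence dominates the target $\frac{\theta_1-\theta_2}{1-\theta_2}$; no step presents a real obstacle, as the argument reduces entirely to this constant-chasing once the truncation is performed.
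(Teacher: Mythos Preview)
Your proof is correct, but it follows a different route from the paper's. You truncate the sequence at level $\theta_0 l$, use hypothesis (3) to bound the loss in the total sum, and then invoke the classical Pliss lemma as a black box on the truncated sequence with upper bound $\theta_0 l$; the final constant-check shows the resulting density $\frac{\theta_1'-\theta_2}{\theta_0-\theta_2}$ dominates $\frac{\theta_1-\theta_2}{1-\theta_2}$. The paper instead re-runs the Pliss mechanism from scratch: it lets $I$ be a covering of the bad indices with average $\leq \theta_2 l$, first gets a crude bound $|I^c|\geq\frac{\theta_1-\theta_2}{N-\theta_2}n$ using the upper bound $Nl$, then uses (3) in a bootstrap step to show that the average of $a_i$ over $I^c$ is at most $l$ (otherwise too large a fraction of $I^c$ would have $a_i>\theta_0 l$), and finally re-applies the same splitting inequality with the improved cap $l$ to obtain $|I^c|\geq\frac{\theta_1-\theta_2}{1-\theta_2}n$.

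Your reduction is cleaner and more modular, at the cost of requiring classical Pliss as an external input; the paper's argument is self-contained and makes the role of hypothesis (3) visible as an \emph{a posteriori} improvement of the effective upper bound on the good set from $Nl$ down to $l$. Either way, the same density of good indices is obtained.
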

\begin{proof}
Denote by $$A := \{i | \mbox{ there exists $1 \leq k \leq n+1-i$ such that $\frac{1}{k}\sum_{j=i}^{i+k-1} a_j \leq \theta_2l$}\} $$
Without loss of generality, we assume that $A \neq \emptyset$, for otherwise the conclusion of Lemma \ref{lem improved pliss} is already true.
Then $A$ is contained in a non-empty set $I \subset \{1,\cdots, n\}$ satisfying that $\frac{1}{|I|} \sum_{i \in I} a_i \leq \theta_2 l$. Then by (1),(2), we obtain that 
\begin{eqnarray*}
l N |I^{c}| + l \theta_2|I| >  l n \theta_1.
\end{eqnarray*}
By $l > 0$, the above inequality implies that $|I^c| \geq \frac{ \theta_1 -   \theta_2}{N- \theta_2}n$.
We claim that 
\begin{eqnarray} \label{lab improved pliss}
\frac{1}{|I^{c}|}\sum_{i \in I^{c}} a_i \leq  l.
\end{eqnarray}
Indeed, if \eqref{lab improved pliss} was false, by (1) we would have at least $\frac{1-\theta_0}{N-\theta_0}|I^{c}| \geq \frac{1-\theta_0}{N-\theta_0}\frac{ \theta_1 -   \theta_2}{N- \theta_2}n > \eta n$ indexes $i \in I^{c}$ such that $a_i > \theta_0 l$, but this would contradict (3).

Now we use (2) again, with the improved estimate \eqref{lab improved pliss} in place of (1), and obtain
\begin{eqnarray*}
l |I^{c}| + \theta_2 l |I| \geq \sum_{i \in I^{c}} a_i + \sum_{i \in I} a_i > n \theta_1 l.
\end{eqnarray*}
This implies that $|I^c| \geq \frac{\theta_1 - \theta_2}{ 1 - \theta_2}n$. We conclude the proof by the definition of $I$.
\end{proof}

Let $x$ be given by the condition of Proposition \ref{lem hyp pt}.
We will define a collection of charts along a sub-orbit of $x$ following the definitions and estimates in \cite{AFLXZ}. 

Let $v_{s}$ be a unit vector in the most contracting direction of $Dg^{q}(x)$ in $T_{x}X$, and let $v_{u}$ be a unit vector orthogonal to $v_{s}$. For each $0 \leq i \leq q$, we define
\begin{eqnarray*}
v^s_i&:=&\frac{Dg^{i}(v_s)}{\norm{Dg^{i}(v_s)}}, \quad v^u_i:=\frac{Dg^{i}(v_u)}{\norm{Dg^{i}(v_u)}}, \\ 
\lambda^s_{i}&:=&\log\frac{\norm{Dg(v^s_i)}}{\norm{v^s_i}}, \quad \lambda^u_{i} :=\log\frac{\norm{Dg(v^u_i)}}{\norm{v^u_i}}, \\
\overline{\lambda}^{e}_i &:=& \min\{ \lambda^u_i, -\lambda^s_i \}.
\end{eqnarray*}

Given $r > 0, \tau >0, \kappa>0$, we define a {\it $(r, \tau,\kappa)-$Box }, which we denote by $U(r,\tau,\kappa)$, to be
\begin{eqnarray*}
U(r,\tau,\kappa) = \{(v,w) \in \R^2 | \norm{v} \leq r, \norm{w} \leq \tau + \kappa \norm{v} \}.
\end{eqnarray*}

For $\kappa>0$, we denote by
\begin{eqnarray*}C(\kappa)&=&\{(v,w)\in \mathbb{R}^2 | \norm{w} < \kappa \norm{v} \},\\
\tilde{C}(\kappa)&=&\{(v,w)\in \mathbb{R}^2 | \norm{v} < \kappa \norm{w} \}.
\end{eqnarray*}
We will refer to these sets as cones.

We now recall some definitions in \cite{AFLXZ}.

\noindent $\bullet$ A curve contained in $\mathbb{R}^2=\mathbb{R}_x\oplus \mathbb{R}_y$ is called a {\it $\kappa-$horizontal graph} if it is the graph of a Lipschitz function from an closed interval $I\subset\mathbb{R}_x$ to $\mathbb{R}_y$ with Lipschitz constant less than $\kappa$. Similarly, we can define the {\it $\kappa-$ vertical graphs}.

\medskip 
\noindent $\bullet$ The boundary of an $(r,\tau,\kappa)-$Box $U$  is the union of two $0-$ vertical graphs and two $\kappa-$ horizontal graphs. We call these graphs respectively, the {\it left (resp. right) vertical boundary of $U$ } and the {\it upper (resp. lower) horizontal boundary of $U$}. We call the union of the left and right vertical boundary of $U$ the {\it vertical boundary of $U$}. Similarly we call the union of the upper and lower horizontal boundary of $U$ the {\it horizontal boundary of $U$}.

\medskip 
\noindent $\bullet$ Horizontal and vertical graphs which connect the boundaries of $U$ will be called full horizontal and  full vertical graphs as in the following definition. Given $r,\tau,\kappa, \eta >0$,
for each $(r,\tau, \kappa)-$Box $U$, an {\it $\eta-$ full horizontal graph of $U$} is an $\eta-$  horizontal graph $L$ such that $L \subset U$ and the endpoints of $L$ are contained in the vertical boundary of $U$. Similarly, we define  {the \it $\eta-$full vertical graphs of $U$}.

\medskip 
\noindent $\bullet$  We define an {\it $\eta-$horizontal strip of $U$} to be a subset of $U$ bounded by the vertical boundary of $U$ and two disjoint $\eta-$ full horizontal graphs of $U$ which are both disjoint from the horizontal boundary of $U$.
Similarly we can define {\it $\eta-$vertical strips of $U$}. Like Boxes, we define the horizontal, vertical boundary of a strip.

\medskip 
\noindent $\bullet$ Given a Box $U$, $\cal{R}'$ a vertical strip of $U$, and $\cal{R}$ a horizontal strip of $U$, a homeomorphism that maps  $\mathcal{R}'$ to $\mathcal{R}$ is said to be {\it regular} if it maps  the horizontal (resp. vertical) boundary of $\mathcal{R}'$ homeomorphically to the horizontal (resp. vertical) boundary of $\mathcal{R}$.

We recall the definition of hyperbolic map in \cite{AFLXZ}.
\begin{defi} \label{def hyp map}
Given $r,\tau >0, 0 <\kappa,\kappa',\kappa''<1$. Denote $U =U(r,\tau,\kappa)$, and let $\mathcal{R}_1$ be a $\kappa-$vertical strip of $U$, $\mathcal{R}_2$ be a $\kappa-$horizontal strip  of $U$. A $C^1$ diffeomorphism $G : \mathcal{R}_1 \to \R^2$ is called a hyperbolic map if it satisfies the following conditions:
\begin{eqnarray}
\label{R1R2} \mbox{$G$ is a regular map from $\mathcal{R}_1$ to $\mathcal{R}_2$},\\
\label{preserve horizontal cone}\forall x\in \mathcal{R}_1, DG_x(C(\kappa'))\subset C(\frac{1}{2}\kappa'),\\ 
\label{preserve vertical cone}\forall x\in \mathcal{R}_2, DG^{-1}_x(\tilde{C}(\kappa")) \subset \tilde{C}(\frac{1}{2}\kappa").
\end{eqnarray}
The following is a sketch of a hyperbolic map.
\begin{figure}[ht!]
\centering
\includegraphics[width=120mm]{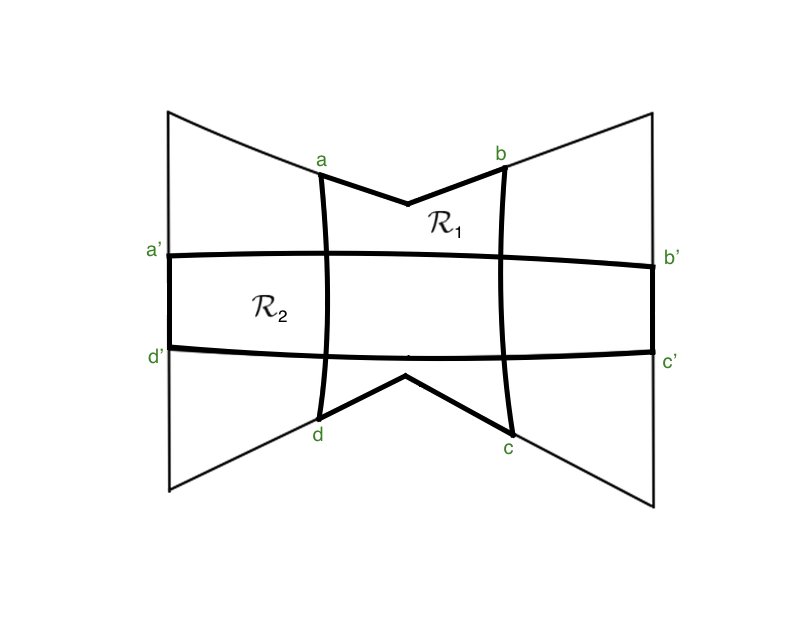}
\caption{ $\mathcal{R}_1$ is the topological rectangle $abcd$,  $\mathcal{R}_2$ is the topological rectangle $a'b'c'd'$. Under a hyperbolic map $G$, $ab$ is mapped to $a'b'$. Similarly, $bc,cd,da$ are mapped respectively to $b'c',c'd',d'a'$. \label{overflow}}
\end{figure}

\end{defi}

For each $0 \leq n \leq q$, we define $i_n : \R^2 \to T_{x_n}S$ as 
\begin{eqnarray*}
i_n(a,b) = a v^{u}_n  + bv^{s}_n.
\end{eqnarray*} 
There exists a constant $R = R(X) > 0$ such that :
$exp_{x_n}$ is a diffeomorphism restricted to $i_n(B(0, D_1^{-\Delta}R))$ and $exp_{x_{n+1}}^{-1}$ is a diffeomorphism restricted to $gexp_{x_n}i_n(B(0,D_1^{-\Delta}R))$. Denote by $g_n$ the $C^2$ diffeomorphism 
\begin{eqnarray*}
\label{def g n}  g_n: B(0,D_1^{-\Delta}R) &\to& \mathbb{R}^2 \\
g_n(v,w)&=&i_{n+1}^{-1}\exp_{x_{n+1}}^{-1}g\exp_{x_n} i_n(v,w). \nonumber
\end{eqnarray*}

We set $M:=1000$, and
\begin{eqnarray*}
\bar{r}  = D_1^{-3\Delta M}, \quad \bar{\kappa}  = D_1^{-\Delta M}, \quad \delta = \frac{\log A_1}{100}.
\end{eqnarray*}

The main estimates in \cite{AFLXZ} are summarised in the following proposition.
\begin{prop} \label{main prop in aflxz}
Under the conditions of Proposition \ref{lem hyp pt} for some absolute constant $\theta_0 \in (0,1)$ sufficiently close to $1$, and $C > 0$ sufficiently large depending only on $X$, 
there exist constant $C_1 = C_1(X)$, integers $0 \leq i_1 \leq i_2 \leq q$, and sequences of positive numbers $\{(r_n,\tau_n, \kappa_n, \tilde{\kappa)}_{n} \}_{i_1 \leq n \leq i_2}$ such that :
\begin{enumerate}
\item (Positive proportion)
\begin{eqnarray*}
i_2 - i_1 \geq D_1^{-C_1\Delta}q,
\end{eqnarray*}
\item (Tameness at the starting and ending points )
\begin{eqnarray*}
&&\cot\angle(E^u_{i_1}, E^s_{i_1}), \cot\angle(E^u_{i_2}, E^s_{i_2}) < \frac{D_1^{M\Delta }}{100}, \\
&&10^{6}\bar{r} \geq r_i \geq \tau_i, \quad \forall i_1 \leq i \leq i_2\\
&&r_{i_1} =  \tau_{i_1} = \bar{r}, \quad \kappa_{i_1} = \tilde{\kappa}_{i_1} =\bar{\kappa}, \\
&&r_{i_2} = 10^{6}\bar{r},  \quad \tau_{i_2} \leq \frac{1}{10}\bar{r}, \quad \kappa_{i_2} = \frac{1}{100}\bar{\kappa}, \quad \tilde{\kappa}_{i_2} = 100\bar{\kappa},  \\
&&\sum_{n=i_1}^{i_2-1} \lambda_n^{u}, \sum_{n=i_1}^{i_2-1} -\lambda_n^{s}  \geq \frac{2}{3}(i_2-i_1)a,
\end{eqnarray*}
\item (Transversal mappings) Let $r_n, \tau_n, \kappa_n$ be as above, we let
\begin{eqnarray*}
U_n = U(r_n, \tau_n, \kappa_n), \quad
C_n = C(\kappa_n), \quad
\tilde{C}_n = \tilde{C}(\tilde{\kappa}_n) 
\end{eqnarray*}
 If $\Gamma$ is a $\kappa_n-$full horizontal graph of $U_n$, then $g_n(\Gamma) \bigcap U_{n+1}$ is a $\kappa_{n+1}$-full horizontal graph of $U_{n+1}$. Moreover, the image of the horizontal boundary of $U_n$ under $g_n$ is disjoint from the horizontal boundary of $U_{n+1}$; the image of the vertical boundary of $U_n$ under $g_n$ is disjoint from the vertical boundary of $U_{n+1}$.

\item (Cone condition)
Furthermore, for any $(v,w) \in U_n$, we have $(Dg_n)_{(v,w)}(C_n) \subset C_{n+1}$; for any $(v,w) \in g_n(U_n) \bigcap U_{n+1}$, we have $(Dg_n^{-1})_{(v,w)}(\tilde{C}_{n+1}) \subset \tilde{C}_{n}$. Moreover, for  any $(v,w) \in U_n$, any $(V,W) \in C_{n}$, let $(\bar{V}, \bar{W}) = (Dg_n)_{(v,w)}(V,W)$, we have $|\bar{V}| \geq e^{\lambda_n^{u}- \delta}|V|$;  for  any $(v,w) \in g_n(U_n) \bigcap U_{n+1}$, any $(V,W) \in \tilde{C}_{n+1}$, let $(\bar{V}, \bar{W}) = (Dg_n^{-1})_{(v,w)}(V,W)$, we have $|\bar{W}| \geq e^{-\lambda_n^{s} - \delta}|W|$.

\item (Hyperbolic map)
 Denote  \begin{eqnarray*} 
 J &=& i_{i_1}^{-1} exp_{x_{i_1}}^{-1} exp_{x_{i_2}} i_{i_2}, \\
 G &=& { i_{i_1}^{-1}exp_{x_{i_1}}^{-1}} g^{i_2-i_1}exp_{x_{i_1}}i_{i_1} = J g_{i_2-1}\cdots g_{i_1}.
 \end{eqnarray*} 
There exist $\cR_1$, a $100\bar{\kappa}-$vertical strip of $U_{i_1}$, and  $\cR_2$, a $100\bar{\kappa}-$horizontal strip of $U_{i_1}$ such that $G$ is a hyperbolic map from $\mathcal{R}_1$ to $\mathcal{R}_2$ with parameters $\kappa'= \bar{\kappa} , \kappa'' = 100 \bar{\kappa}$. Moreover, for each $0 \leq j \leq i_2-i_1$, we have $g_{i_1+j-1}\cdots g_{i_1}(\cR_1) \subset U_{i_1+j}$.
\end{enumerate}

\end{prop}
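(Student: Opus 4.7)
The plan is to mirror the structure of the proof of Theorem~4 in \cite{AFLXZ}, with the variant of Pliss's lemma (Lemma \ref{lem improved pliss}) substituted in at the crucial step so that the sparsity hypothesis (3) of Proposition \ref{lem hyp pt} replaces the uniform-expansion assumption used there. Concretely, I apply Lemma \ref{lem improved pliss} to the sequences $a_n^{u}:=\lambda_n^{u}$ and $a_n^{s}:=-\lambda_n^{s}$ along the orbit of $x$, with threshold $l=\log A_1$ and parameters $N=\Delta$, $\theta_0$ the absolute constant of the statement, and $\theta_1,\theta_2$ slightly less than $\theta_0$. Condition (1) of the Pliss lemma follows from $\norm{Dg}\le A_1^{\Delta}$, condition~(2) from the hypothesis $\norm{Dg^{q}(x)}>A_1^{q}$ together with the area-preservation identity relating $\sum \lambda_n^{u}$ and $\sum (-\lambda_n^{s})$, and condition~(3) from hypothesis~(3) of Proposition \ref{lem hyp pt} after noting that $\{\norm{Dg(y)}>A_1^{\theta_0^{-1}}\}$ controls the indices where $a_n^{u}>\theta_0 l$. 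Running the lemma forward yields a positive-density set of forward Pliss times; running it on the reversed orbit (using $g^{-1}$) yields backward Pliss times. I choose $i_1$ as a backward Pliss time near the start and $i_2$ as a forward Pliss time near the end so that $[i_1,i_2]$ captures a $D_1^{-C_1\Delta}$-fraction of $q$, giving item~(1).

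Next, I set up the charts $i_n, g_n$ exactly as in \cite{AFLXZ}, using the contracting direction $v_s$ and its orthogonal $v_u$, and I initialize the box parameters at the starting Pliss time: $r_{i_1}=\tau_{i_1}=\bar r = D_1^{-3\Delta M}$ and $\kappa_{i_1}=\tilde\kappa_{i_1}=\bar\kappa=D_1^{-\Delta M}$. The angle bound $\cot\angle(E^u_{i_1},E^s_{i_1})<D_1^{M\Delta}/100$ at the starting Pliss time, and the analogous bound at $i_2$, follow from the standard computation in \cite{AFLXZ} that the angle between $v_u^{i}$ and $v_s^{i}$ at a Pliss time is controlled by the exponential of the accumulated expansion gap, which is bounded below by the Pliss property. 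The lower bounds $\sum \lambda^u_n,\sum(-\lambda^s_n)\ge \frac23(i_2-i_1)\log A_1$ on the subinterval are a direct consequence of the Pliss property at both endpoints (upper Pliss at $i_1$, lower Pliss at $i_2$).

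For items~(3)--(4), I update $(r_n,\tau_n,\kappa_n,\tilde\kappa_n)$ along the orbit using the same recursive recipe as in \cite{AFLXZ}: at each step, $r_n$ is multiplied by $e^{\lambda_n^{u}-\delta}$ (capped at $10^{6}\bar r$), $\tau_n$ is controlled by the $C^{2}$ nonlinearity $D_1$ acting on the box, and $\kappa_n,\tilde\kappa_n$ are adjusted by a factor $e^{-(\lambda_n^{u}-\lambda_n^{s})+O(\delta)}$. The Pliss property at every index in $[i_1,i_2]$, combined with $\norm{D^2g}\le D_1$ and the tiny box size $\bar r=D_1^{-3\Delta M}$, keeps nonlinear corrections negligible compared to the linear cone contraction. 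The cone inclusions $(Dg_n)_{(v,w)}(C_n)\subset C_{n+1}$ and $(Dg_n^{-1})_{(v,w)}(\tilde C_{n+1})\subset \tilde C_n$, the quantitative expansion $|\bar V|\ge e^{\lambda_n^u-\delta}|V|$ in horizontal cones, and the full-graph property for $\kappa_n$-horizontal graphs are now verbatim adaptations of the lemmas in \cite{AFLXZ}, since all the quantitative inputs (lower bounds on $\lambda_n^{u}$ and $-\lambda_n^{s}$ on average, sparsity of large derivatives) are available from the improved Pliss lemma.

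Finally, to obtain item~(5), I use that at $i_2$ the accumulated horizontal expansion has stretched $U_{i_1}$ far beyond its original width in the horizontal direction, so the image $g_{i_2-1}\cdots g_{i_1}(U_{i_1})$ cuts $U_{i_2}$ in a $\kappa_{i_2}$-full horizontal strip whose preimage is a $100\bar\kappa$-vertical strip $\mathcal R_1$ of $U_{i_1}$; the transfer map $J$ from $U_{i_2}$ back to $U_{i_1}$-coordinates is $C^{1}$-close to a rotation by at most $D_1^{M\Delta}/100$, so it carries this horizontal strip to a $100\bar\kappa$-horizontal strip $\mathcal R_2$ of $U_{i_1}$, and the cone conditions at the endpoints yield the hyperbolic-map parameters $\kappa'=\bar\kappa$, $\kappa''=100\bar\kappa$. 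The regularity of $G$ on boundaries comes from the fact that the images of vertical (resp.\ horizontal) boundaries under each $g_n$ miss the vertical (resp.\ horizontal) boundary of the next box, which was already recorded in item~(3). The main obstacle, and the place where the new Pliss variant is essential, is the passage through intermediate ``bad'' times where $\norm{Dg(g^{n}(x))}$ is large: at these times the box parameters $r_n,\tau_n$ could in principle blow up, but the sparsity bound $\eta$ and the uniform upper bound $\norm{Dg}\le A_1^{\Delta}$ give a crude total-distortion budget of $A_1^{\Delta\eta(i_2-i_1)}$ that is absorbed by the $\frac{2}{3}(i_2-i_1)\log A_1$ expansion margin, ensuring that tameness at $i_2$ is preserved; this is the one step that requires a genuinely new estimate beyond \cite{AFLXZ}.
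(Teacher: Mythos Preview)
Your proposal misses the essential \emph{closing} step. In the paper's proof (and in \cite{AFLXZ}), the indices $i_1,i_2$ are not chosen merely as Pliss times at the two ends of the orbit; they are selected via a pigeonhole argument among the $\gg q/2$ ``good'' points (points that are simultaneously forward and backward Pliss) along a subsequence of step $\lceil D_1^{-C_1\Delta}q\rceil$, so as to force
\[
d(g^{i_1}(x),g^{i_2}(x))<D_1^{-\frac{C_1\Delta}{200}},\qquad d_{T^1X}(v^{s}_{i_1},v^{s}_{i_2}),\ d_{T^1X}(v^{u}_{i_1},v^{u}_{i_2})<D_1^{-\frac{C_1\Delta}{200}}.
\]
The lower bound $i_2-i_1\ge D_1^{-C_1\Delta}q$ comes from the step size of this subsequence, not from placing $i_1$ near $0$ and $i_2$ near $q$. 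Without this proximity of base points and frames, the change-of-chart map $J=i_{i_1}^{-1}\exp_{x_{i_1}}^{-1}\exp_{x_{i_2}}i_{i_2}$ has no reason to be $C^1$-close to the identity (your claim that $J$ is ``close to a rotation by at most $D_1^{M\Delta}/100$'' is unjustified), so $Jg_{i_2-1}\cdots g_{i_1}(U_{i_1})$ need not intersect $U_{i_1}$ at all, and item~(5) collapses.

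A secondary point: you have the Pliss orientations reversed. For the box-graph transform to work one needs the \emph{forward} Pliss property at $i_1$ (so that boxes are stretched horizontally as $n$ increases from $i_1$) and the \emph{backward} Pliss property at $i_2$; in the paper both $i_1$ and $i_2$ are taken ``good'' (i.e., two-sided Pliss), which in particular yields conditions (2)--(3) on the interval $[i_1,i_2]$. Your choice ``$i_1$ a backward Pliss time, $i_2$ a forward Pliss time'' gives neither the cone control along $[i_1,i_2]$ nor the closing, so items (3)--(5) do not follow.
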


We will give a sketch of the proof and refer the detailed estimates to \cite{AFLXZ}.
\begin{proof}
Set  $a = \log A_1$. Condition (4) in Proposition \ref{lem hyp pt} translates into 
\begin{eqnarray*}
\frac{1}{q} \sum_{i=0}^{q-1} \lambda_i^{s} \leq -a,  \quad \frac{1}{q} \sum_{i=0}^{q-1} \lambda_i^{u} \geq a.
\end{eqnarray*}

Using condition (3) and Lemma \ref{lem improved pliss} in place of the Pliss lemma, by setting $\theta_0 \in (0,1)$ to be an absolute constant sufficiently close to $1$, and setting $C > 0$ to be sufficiently large depending only on $X$, we can show analogously to Lemma 4.4 in \cite{AFLXZ} , that there are more than $q/2$ points in $\{g^{k}(x)| 0 \leq k \leq q-1\}$ that are \lq\lq good in the orbit of $x$\rq\rq. Here a point $g^{n}(x)$ is said to be \textit{good in the orbit of $x$} if $n \in [1, q-1]$ satisfies the following conditions :
\begin{eqnarray}
\frac{1}{k}\sum_{j=n}^{n+k-1}\overline{\lambda}_{j}^e&>&(1-\frac{1}{1000})\theta_0^{-1}a, \forall 1\leq k\leq q-n,\label{fwd good 2}\\
\frac{1}{k}\sum_{j=n-k}^{n-1}\overline{\lambda}_{j}^e&>&(1-\frac{1}{1000})\theta_0^{-1}a, \forall 1\leq k\leq n.\label{bwd good}
\end{eqnarray}
We can show in analogy to Lemma 4.5 that $|\cot \angle(v^s_n, v^u_n)| \leq A_1^{3\Delta }$ for all $n$ such that $g^{n}(x)$ is good in the orbit. Then there exist an integer $0 \leq i \leq D_1^{-C_1 \Delta}q $ such that the subsequence $(x_{i + j \lceil D_1^{-C_1 \Delta}q\rceil} )_{0 \leq j \leq \lfloor \frac{q}{\lceil D_1^{-C_1 \Delta}q \rceil} \rfloor-1} $ contains at least $\frac{1}{3}D_1^{C_1 \Delta}$ many points which are good in the orbit of $x$.  By letting $C_1$ to be sufficiently large depending only on $X$, we can apply the pigeonhole principle to the above subsequence as in the proof of Proposition 4.1 in \cite{AFLXZ} and obtain $0 \leq i_1 < i_2 \leq q-1$ that satisfy the following conditions: 
\begin{enumerate}
\item $i_2 - i_1 \geq D_1^{-\Delta C_1}q$,
\item $\sum_{j=i_1}^{i_1+ k-1}\overline{\lambda}_{j}^e>(1-\frac{1}{1000})\theta_0^{-1}ak, \forall 1\leq k \leq i_2-i_1,$
\item $\sum_{j=i_2-k}^{i_2-1}\overline{\lambda}_{j}^e>(1-\frac{1}{1000})\theta_0^{-1}ak, \forall 1\leq k\leq i_2-i_1,$
\item The angles $\angle(v^s_{i_1}, v^u_{i_1}), \angle(v^s_{i_2},v^u_{i_2})$ satisfy
\begin{eqnarray*}
\log|\cot\angle(v^s_{i_1}, v^u_{i_2})|\leq 3\Delta a, \quad \log|\cot\angle(v^s_{i_2}, v^u_{i_2})|\leq 3\Delta a,
\end{eqnarray*}
\item Moreover, we have $d(g^{i_1}(x), g^{i_2}(x)) < D_1^{-\frac{C_1 \Delta}{200}}$, and 
\begin{eqnarray*}
d_{T^1X}(v^s_{i_1}, v^s_{i_2})<D_1^{-\frac{C_1 \Delta}{200}}, \quad d_{T^1X}(v^u_{i_1}, v^u_{i_2})<D_1^{-\frac{C_1 \Delta}{200}}.
\end{eqnarray*}
\end{enumerate}
We note the similarities between the above conditions and those of Definition 4.3 in \cite{AFLXZ}. However here we have a large inverse power of $D_1$ in (5) instead of a small inverse power of $q$ as in Definition 4.3, (4) in \cite{AFLXZ}. This is sufficient for the rest of proof, since $r_{i_1}$,$r_{i_2}$, $\angle( E^s_{i_1}, E^u_{i_1})$ and $\angle( E^s_{i_2}, E^u_{i_2})$ are lower bounded by  $D^{-O(\Delta )}$.

At this point, we can invoke the proof of Proposition 4.2, and obtain (2) as a consequence of Lemma 4.6, 4.7, 4.8 in \cite{AFLXZ}; and obtain (3),(4) as a consequence of Proposition 4.5 in \cite{AFLXZ}. We obtain (5) following the proof of Proposition 4.4 in \cite{AFLXZ}. 
\end{proof}

Now we are ready to conclude the proof of Proposition \ref{lem hyp pt}.
\begin{proof}[Proof of Proposition \ref{lem hyp pt}]
We apply Proposition \ref{main prop in aflxz} and obtain $i_1$,$i_2$, $\cal{R}_1$, $\cal{R}_2$, $G$, $U_i$, $C_i$, $\tilde{C}_i$ as in the proposition. We set $i= i_1, j= i_2$. By (5) in Proposition \ref{main prop in aflxz} and Proposition 4.3 in \cite{AFLXZ}, we obtain a hyperbolic periodic point in $\cR_1 \bigcap \cR_2$, denoted by $y$. 

We note the following lemma whose proof follows from the standard construction of unstable / stable manifolds for uniformly hyperbolic maps using graph transform argument.
For this reason, we omit its proof.

\begin{lemma} \label{lem standard graph transform}
Let $r, \tau > 0, L > 1$, $0 < \kappa, \kappa', \kappa'' < 1$, $U = U(r, \tau, \kappa)$ and let  $G : \cal{R}_1 \to \cal{R}_2$ be a hyperbolic map where $\cal R_1$ ( resp. $\cal R_2$) is the $\kappa-$vertical strip (resp. $\kappa-$ horizontal strip ) of $U$ as in Definition \ref{def hyp map}, and $\kappa', \kappa''$ satisfy inclusion \eqref{preserve horizontal cone}, \eqref{preserve vertical cone} respectively. Assume that

(1) For each $x \in \cal R_1$, each $(V, W) \in C(\kappa')$, set $(\bar{V}, \bar{W}) = DG_x(V,W)$, then $|\bar{V}| \geq L|V|$,

(2) For each $x \in \cal R_2$, each $(V, W) \in \tilde{C}(\kappa'')$, set $(\bar{V}, \bar{W}) = DG_x^{-1}(V,W)$, then $|\bar{W}| \geq L|W|$.

 Then there exists a hyperbolic fixed point of $G$,  $y\in \cal R_1 \bigcap \cal R_2$, whose local unstable manifold in $\cal R_2$, denoted by $\cWu_{G}(y)$, is a $\kappa'-$horizontal graph; whose local stable manifold in $\cal R_1$, denoted by $\cWs_{G}(y)$, is a $\kappa''-$vertical graph. Moreover we have $$G(\cal R_1) \subset B(\cWu_G(y), 2L^{-1} diam(U)).$$
\end{lemma}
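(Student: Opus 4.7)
The plan is to run the classical Hadamard--Perron graph transform, specialized to a single box. Let $\mathcal{F}^u$ be the set of full $\kappa'$-horizontal Lipschitz graphs of $U$, equipped with the sup of vertical differences $\|\Gamma_1 - \Gamma_2\|_\infty$; this is a complete metric space. For $\Gamma \in \mathcal{F}^u$, the slice $\Gamma \cap \mathcal{R}_1$ is a $\kappa'$-horizontal graph connecting the two vertical sides of the vertical strip $\mathcal{R}_1$; the cone inclusion \eqref{preserve horizontal cone} together with regularity force $G(\Gamma \cap \mathcal{R}_1)$ to be a $\kappa'$-horizontal graph joining the two vertical sides of the horizontal strip $\mathcal{R}_2$, which coincide with the vertical boundary of $U$. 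Thus $T(\Gamma) := G(\Gamma \cap \mathcal{R}_1)$ defines a self-map of $\mathcal{F}^u$ with $T(\Gamma) \subset \mathcal{R}_2$, and the horizontal-expansion hypothesis (1) turns $T$ into an $L^{-1}$-contraction by the usual implicit-function argument. Symmetrically, on the complete space $\mathcal{F}^s$ of full $\kappa''$-vertical Lipschitz graphs of $U$, the formula $S(\Delta) := G^{-1}(\Delta \cap \mathcal{R}_2)$ defines an $L^{-1}$-contraction via (2) and \eqref{preserve vertical cone}, with $S(\Delta) \subset \mathcal{R}_1$.

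By Banach's fixed point theorem, $T$ has a unique fixed graph $\Gamma^u \subset \mathcal{R}_2$ and $S$ has a unique fixed graph $\Delta^s \subset \mathcal{R}_1$. Since $\kappa' \kappa'' < 1$, the horizontal graph $\Gamma^u$ and the vertical graph $\Delta^s$ meet transversally in a single point $y \in \mathcal{R}_1 \cap \mathcal{R}_2$. The identities $G(\Gamma^u \cap \mathcal{R}_1) = \Gamma^u$ and $G^{-1}(\Delta^s \cap \mathcal{R}_2) = \Delta^s$ imply that $G$ permutes $\Gamma^u \cap \Delta^s = \{y\}$, so $G(y) = y$; the cone and expansion conditions make $y$ hyperbolic, with $\Gamma^u$ and $\Delta^s$ as its local unstable and stable manifolds, which we denote $\mathcal{W}^u_G(y)$ and $\mathcal{W}^s_G(y)$, with the required slope and containment.

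For the final containment, fix $p \in G(\mathcal{R}_1) = \mathcal{R}_2$, set $q := G^{-1}(p) \in \mathcal{R}_1$, and let $\ell \in \mathcal{F}^u$ be a $0$-horizontal line of $U$ through $q$. Then $p \in G(\ell \cap \mathcal{R}_1) = T(\ell)$, and since both $\ell$ and $\mathcal{W}^u_G(y)$ are full horizontal graphs of $U$, we have $\|\ell - \mathcal{W}^u_G(y)\|_\infty \leq \operatorname{diam}(U)$. Using $T(\mathcal{W}^u_G(y)) = \mathcal{W}^u_G(y)$ together with the contraction,
\begin{align*}
\|T(\ell) - \mathcal{W}^u_G(y)\|_\infty = \|T(\ell) - T(\mathcal{W}^u_G(y))\|_\infty \leq L^{-1} \|\ell - \mathcal{W}^u_G(y)\|_\infty \leq L^{-1} \operatorname{diam}(U).
\end{align*}
Converting vertical distance along the $\kappa'$-Lipschitz graph $\mathcal{W}^u_G(y)$ to Euclidean distance gives $d(p, \mathcal{W}^u_G(y)) \leq \sqrt{1+\kappa'^2}\, L^{-1} \operatorname{diam}(U) < 2 L^{-1} \operatorname{diam}(U)$, as claimed.

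The principal technical step is verifying that $T$ (and symmetrically $S$) is an $L^{-1}$-contraction in the sup-vertical metric; this reduces to the standard pointwise calculation, in which one writes the preimages under $G$ of the two points of $T(\Gamma_i)$ over a common horizontal coordinate and applies the horizontal expansion bound from (1) together with the cone invariance \eqref{preserve horizontal cone}. Nothing beyond the well-known one-step graph-transform estimate is required, which is why the authors appeal to it without giving details.
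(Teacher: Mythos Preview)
Your proposal is correct and follows exactly the standard graph-transform construction that the paper invokes but explicitly omits. One small slip: the $L^{-1}$-contraction of $T$ on horizontal graphs is actually driven by hypothesis~(2) (vertical expansion under $G^{-1}$, i.e.\ vertical contraction under $G$), while hypothesis~(1) is what guarantees that $T(\Gamma)$ is again a \emph{full} horizontal graph; since both rates equal $L$ and the factor of $2$ in the statement absorbs the $\kappa',\kappa''$ corrections, your final containment bound is unaffected.
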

We set $L = A^{\frac{j-i}{2}}$.
We now verify conditions (1),(2) of Lemma \ref{lem standard graph transform} for $L$, $G$, $U = U_{i_1}$, $\kappa = 100\bar{\kappa}, \kappa' = \bar{\kappa}, \kappa'' = 100\bar{\kappa}$. We only verify condition (2) in details since condition (1) can be verified in a similar fashion. By Proposition \ref{main prop in aflxz}(5), for any $i \leq n \leq j-1$, we have $g_{n+1}^{-1} \cdots g_{j-1}^{-1} J^{-1}(\cal R_2) = g_{n} \cdots g_{i}(\cal R_1) \subset U_{n+1} \bigcap g_{n}(U_{n})$. For any $i \leq n \leq j$, any $(v,w) \in \cal R_2$, for any $(V,W) \in  \tilde{C}_{j}$ ( here $\tilde{C}_j$ is given by Proposition \ref{main prop in aflxz}(3)), denote by $(v_n, w_n) = g_{n}^{-1}\cdots g_{j-1}^{-1}J^{-1}(v,w)$, $(V_n, W_n) = D(J g_{j-1}\cdots g_{n})^{-1}_{(v,w)}(V,W)$. Then we have $(v_n, w_n) \in U_n$ for all $i \leq n \leq j$. By Proposition \ref{main prop in aflxz}(2),(4), we have $|W_i| \geq e^{\sum_{n=i}^{j-1}( -\lambda_n^{s} - \delta)}|W_j| \geq A^{\frac{j-i}{2}}|W| = L|W|$. 

By Lemma \ref{lem standard graph transform} and Proposition \ref{main prop in aflxz}(2), we obtain 
\begin{eqnarray*}
G(\cal{R}_1) \subset B(\cWu_{G}(y), 200A^{-\frac{j-i}{2}}\bar{r})
\end{eqnarray*}
We denote by $z = exp_{x_{i_1}}i_{i_1}(y)$. By Proposition \ref{main prop in aflxz}(5) and the fact that $y$ is a hyperbolic fixed point of $G$, we conclude that $z$ is a $g-$hyperbolic periodic point.
Then by Proposition \ref{main prop in aflxz} and by possibly increasing $C_1$ depending only on $X$, we can ensure that $z \in \cal{H}(g, D_1^{-C_1\Delta})$, and 
\begin{eqnarray*}
g^{j}(x) \in exp_{x_{i_1}}i_{i_1}G(\cal{R}_1) \subset  B(\cWu_{D_1^{-3C_1\Delta}}(z), A_1^{-\frac{ q }{2D_1^{C_1  \Delta}}})
\end{eqnarray*}
We conclude the proof by letting $C$ to be sufficiently large depending only on $X$.

\end{proof}

\section{} \label{AppB}

\begin{proof}[Proof of Corollary \ref{cor big G implies positive entropy}]  
In this following, we briefly denote $H(f, \alpha, r)$ by $H(\alpha, r)$, and denote $H(f, \lambda )$ by $H(\lambda )$.

We first prove the corollary under condition (1).
For any $\alpha, r, \xi > 0$, any $y \in \cal H(\alpha, r)$,
\begin{eqnarray*}
|B(\cWu_r(y), \xi)| \ll r \xi.
\end{eqnarray*}
 It is clear from the definition of $\cal G$ in \eqref{defG} that for any $\lambda \in (0,1)$,
\begin{eqnarray*}
|\cal H(\lambda)| &\geq& |\cal G(\lambda, \xi, f) | /  | B(\cWu_{\lambda^3}(y), \xi) | \\
&\gg&  \lambda^{-3} \xi^{-1} | \cal G(\lambda, \xi, f) |
\end{eqnarray*}
By \eqref{def G k} and Proposition \ref{lem alt}, it suffices to check that $|\cal G_i| \gg A^{-\frac{q_{i+1} \theta_0^{i+1}}{2D^{C \Delta q_i}}}  D^{11C \Delta q_i}$. Since $x^{-1} > e^{-x}$ for $x \in (0, \infty)$, we have 
\begin{eqnarray*}
|\cal G_i| &\geq&  \frac{\eta^{K-i}\varepsilon}{(K+1)l_i} \\ 
&>& (10K)^{-1}\eta^{K} \varepsilon \frac{q_{i+1}\theta_0^{i+1}}{D^{C \Delta q_i}l_i} A^{-\frac{q_{i+1}\theta_0^{i+1}}{4D^{C \Delta q_i}}} \\
&\gg& A^{-\frac{q_{i+1}\theta_0^{i+1}}{2D^{C  \Delta q_i}}}  D^{11C \Delta q_i}.
\end{eqnarray*}
The last inequality follows from by letting $C' \gg 1$, and
\begin{itemize}
\item $K^{-1}\eta^{K}\varepsilon q_{i+1} \theta_0^{i+1} l_{i}^{-1}  \gg 1$  since    by \eqref{initial q}, and $q_{i+1} l_{i}^{-1} \theta_0^{i+1} \geq \frac{1}{2}q_{i} \theta_0^{i} \geq  \frac{1}{2}q_0 \geq  \frac{1}{2}\varepsilon^{-1}e^{C' (\log(\frac{\log A}{h}))^2 + C'} \gg \varepsilon^{-1} K \eta^{-K}$,
\item $A^{\frac{q_{i+1}\theta_0^{i+1}}{4D^{C \Delta q_i}}} \geq D^{12C \Delta q_i}$, by $A \gg 1$, \eqref{HHH} and \eqref{def l q}.
\end{itemize}

Now we consider condition (2). We set $\delta_0 = D^{-Tower(Q_0,  Q_1, K+3)}$.
By Lemma \ref{lemmacollectionsofproperties}(3),  we have 
\begin{eqnarray*}
\delta_0 \leq \xi_0 \mbox{ and }
\delta_0 \leq \lambda^3_i,  \forall 0 \leq i \leq K.
 \end{eqnarray*}
For any $\lambda, \xi \in (0,1)$, any $y \in \cal H( \lambda )$, any $\delta \in (0, \lambda^3 )$, we have
\begin{eqnarray*}
m( B(B(\cWu_{\lambda^3}(y), \xi), \delta )) \ll \lambda^3 \max( \xi, \delta).
\end{eqnarray*}

By \eqref{defG} and condition (2), we have for some $0 \leq i \leq K$ that,
\begin{eqnarray*}
|\cal H( \lambda_i)| &\geq& m( B( \cal G(\lambda_i, \xi_i, f), \delta_0) )  /  \sup_{y \in H(\lambda_i) }m(B(B(\cWu_{\lambda^3_i}(y), \xi_i), \delta_0)) \\
&\gg&  \varepsilon \lambda_i^{-3} \min( \xi_i^{-1} , \delta_0^{-1}).
\end{eqnarray*}

By Proposition \ref{lem alt},  it suffices to observe from  Lemma \ref{lemmacollectionsofproperties}  that
\begin{eqnarray*}
\varepsilon \gg \lambda_i^{-11} \max(\xi_i, \delta_0), \quad \forall 0 \leq i \leq K.
\end{eqnarray*}

\end{proof}

\
\

\

{\footnotesize \noindent Bassam Fayad\\
Institut de Math\'ematiques de Jussieu-Paris Rive Gauche UMR7586 CNRS
Université Paris Diderot-Université Pierre et Marie Curie \\
E-mail: bassam.fayad@imj-prg.fr

\
\

{\footnotesize \noindent Zhiyuan Zhang\\
Institut de Math\'{e}matique de Jussieu---Paris Rive Gauche, B\^{a}timent Sophie Germain, Bureau 652\\
75205 PARIS CEDEX 13, FRANCE\\
Email address: zzzhangzhiyuan@gmail.com

\end{document}